\newtheorem{Thm}{Theorem}[section]
\newtheorem{Def}[Thm]{Definition}
\newtheorem{Cor}[Thm]{Corollary}
\newtheorem{Prop}[Thm]{Proposition}
\newcommand{\Cliq}{\mathsf{Cliq}}
\renewcommand{\to}{\longrightarrow}
\newcommand{\R}{{\mathrel{\mathscr R}}} % R - relation
\newcommand{\ov}[1]{\ensuremath{\overline {#1}}}
\newcommand{\til}[1]{\ensuremath{\widetilde {#1}}}
\newcommand{\wh}{\widehat}
\newcommand{\Ext}{\mathop{\mathrm{Ext}}\nolimits}
\newcommand{\CW}{\mathop{\Sigma}\nolimits}
\newcommand{\Inc}{\operatorname{Inc}}
\def\AAA{\mathcal A} % for hyperplane arrangement
\def\LLL{\mathcal L} % intersection lattice of hyperplane arrangement
\def\FFF{\mathcal F} % face monoid of hyperplane arrangement
\author{Stuart Margolis\addressmark{1}\thanks{Email: \email{margolis@math.biu.ac.il}.}
  \and Franco Saliola\addressmark{2}\thanks{Email: \email{saliola.franco@uqam.ca}.}
  \and Benjamin Steinberg\addressmark{3}\thanks{Email: \email{bsteinberg@ccny.cuny.edu}.}}
\title{Poset topology and homological invariants of algebras arising in algebraic combinatorics}
\address{%
  \addressmark{1}Department of Mathematics, Bar Ilan University, Israel\\
  \addressmark{2}Laboratoire de Combinatoire et d'informatique Mathématique (LaCIM), UQAM, Canada\\
  \addressmark{3}City College of New York, USA}
\keywords{
    left regular band,
    hyperplane arrangement,
    order complex,
    cohomology,
    poset topology,
    CW poset,
    Leray number,
    chordal graph,
    global dimension,
    hereditary algebra,
    (minimal) projective resolutions,
    Koszul algebra.
}
\begin{document}
\maketitle

\begin{abstract}
\paragraph{Abstract.}
We present a beautiful interplay between combinatorial topology and homological
algebra for a class of monoids that arise naturally in algebraic combinatorics.
We explore several applications of this interplay.
For instance, we provide a new interpretation of the Leray number of a clique complex in terms of non-commutative algebra.
%We introduce natural generalizations of the trace monoids of Cartier and Foata,
%and a class of monoids called CW left regular bands.

\paragraph{Résumé.}
Nous présentons une magnifique interaction entre la topologie combinatoire et
l'algèbre homologique d'une classe de monoïdes qui figurent naturellement dans
la combinatoire algébrique.
Nous explorons plusieurs applications de cette interaction. Par exemple, nous
introduisons une nouvelle interprétation du nombre de Leray d'un complexe de
clique en termes de la dimension globale d'une certaine algèbre non
commutative.
\end{abstract}

\section{Introduction}
\label{sec:in}
In a highly influential paper~\cite{BHR}, Bidigare, Hanlon and Rockmore showed
that a number of popular Markov chains are random walks on the faces of
a hyperplane arrangement. Their analysis of these Markov chains took advantage
of the monoid structure on the set of faces. This theory was later extended by
Brown~\cite{Brown1,Brown2} to a larger class of monoids called left regular
bands. In both cases, the representation theory of these monoids play
a prominent role. It is used to compute the spectrum of the transition
operators of the Markov chains and to prove diagonalizability of the transition
operators.

It develops that there is a close connection between algebraic and
combinatorial invariants of these monoids: certain homological invariants of
the monoid algebra coincide with the cohomology of order complexes of posets
naturally associated with a monoid. We present here a synopsis of the results
together with applications of this beautiful interplay between combinatorial
topology and homological algebra.

The full details will appear in two longer papers~\cite{oldpaper, newpaper},
which include lengthy discussions of the history, examples and applications of
these monoids as well as thorough references to the literature.

\section{Left Regular Bands}
\label{s:LRBs}

A particularly important class of monoids arising in probability theory and in
algebraic combinatorics is the class of left regular
bands~\cite{Brown1,Brown2,DiaconisBrown1,Saliola,Saliolahyperplane,bjorner2,GrahamLRB}.
\begin{Def}
    \label{DefLRB1}
A \emph{left regular band} is a semigroup $B$ satisfying the identities
\begin{gather}\label{leftregularity}
    x^2=x\ \text{for all}\ x\in B,\\
    xyx = xy\ \text{for all}\ x,y \in B.
\end{gather}
In general we will assume in this paper that $B$ is finite and has an identity (is a monoid).
\end{Def}

An example arises from the set $\{0, +, -\}$, where the monoid operation
$\circ$ satisfies $x \circ y = x$ if $x\neq0$ and $x \circ y = y$ otherwise.
Several important examples of left
regular bands arising in combinatorics are submonoids $\{0,+,-\}^n$.
These include the monoid structure on both real hyperplane
arrangements, matroids, oriented matriods and interval greedoids
(see Section~\ref{s:LRBExamples}).

It is known that left regular bands are precisely the homomorphic images of
submonoids of $\{0, +, -\}^n$, for some $n$.
The paper \cite{qvariety} classifies those that are submonoids of some
$\{0, +, -\}^n$.
Almost all the left regular bands appearing so far in the algebraic
combinatorics literature embed in $\{0,+,-\}^n$; the main exception are the
complex hyperplane monoids discussed in \S~\ref{ss:complexhyperplanemonoids}
below.

The following two posets associated with a left regular band play a central
role in what follows.

\paragraph{$\R$-poset.}
For a monoid $M$ and $m \in M$, we let $mM = \{mm' : m' \in M\}$ denote the
\emph{principal right ideal} generated by $m$.
\emph{Green's $\R$-preorder} is defined on $M$ by
$m\leq_\R n$ if $mM\subseteq nM$.
The associated equivalence relation is denoted $\R$; it is one of
\emph{Green's relations} on a monoid~\cite{Green}.

In a left regular band $B$, Green's $\R$-preorder admits the following
description:
\begin{gather*}
    a \leq_\R b \text{~~iff~~} ba = a.
\end{gather*}
Thus, if $a \leq_\R b$ and $b \leq_\R a$, then $a=aba=ab=b$.
It follows that $B$ is a poset with respect to $\leq_\R$.
We call this partial order the \emph{$\R$-order} on $B$ and denote it simply by
$\leq$.
Figures~\ref{figure: free lrb} and~\ref{fig:3Lines}
illustrate the $\R$-order on specific examples.

\paragraph{Support Lattice.}

For a left regular band $B$, let $\Lambda(B)$ denote the set
of principal left ideals of $B$:
\begin{gather*}
    \Lambda(B) = \left\{ Ba : a \in B \right\}
\end{gather*}
where $Ba = \{ba : b \in B\}$. Then $\Lambda(B)$ is a poset under inclusion.
Furthermore, it is a lattice with intersection as the meet operation (denoted $\wedge$).
The map $\sigma\colon B \xrightarrow{} \Lambda(B)$ given by $\sigma(a)=Ba$
satisfies the following properties for all $a,b \in B$:
\begin{gather}
\sigma(a b) = \sigma(a) \wedge \sigma(b)
\qquad\qquad\text{and}\qquad\qquad
a b = a \text{~~iff~~}\sigma(b) \geq \sigma(a)
\end{gather}
The first property says that $\sigma$ is a monoid morphism.
Note that $\sigma$ is an order preserving poset morphism:
that is, if $a \leq_\R b$, then $\sigma(a) \leq \sigma(b)$.
Proofs of these statements can be found in~\cite{Clifford, Brown1}.

Following the standard conventions of lattice theory, we denote by $\wh 1$ the
top of $\Lambda(B)$ (which is $B$, itself) and by $\wh 0$ the bottom (called
the \emph{minimal ideal} of $B$).
In~\cite{Brown1,Brown2}, Brown calls $\Lambda(B)$ the \emph{support lattice} of
$B$ (although he uses the reverse ordering) and $\sigma$ is called the
\emph{support map}.

\section{Examples of Left Regular Bands}
\label{s:LRBExamples}
This section quickly surveys some examples of left regular bands found in the
combinatorics literature. Often, these are special cases of certain
semigroup-theoretic constructions.
Further examples and detailed expositions can be found in~\cite{oldpaper}.
We also describe a new class of left regular bands introduced in
\cite{oldpaper}.

\paragraph{Free left regular bands.}
The \emph{free left regular band} on a set $A$ will be denoted $F(A)$ and the
free left regular band on $n$-generators will be written $F_n$.
One can view $F(A)$ as the set of words over $A$ with no repeated letters;
multiplication is given by concatenation followed by removal of repetitions
(reading from left to right).
For example, in $F_5$ we have $523 \cdot 13245 = 52314$.
The support lattice of $F(A)$ can be identified with the power set $P(A)$ with
the operation of union; the support map $\sigma$ takes a word to the set of
letters appearing in the word;
Figure~\ref{figure: free lrb} illustrates
the $\R$-order and the support lattice of
$F(\{a,b,c\})$.

\begin{figure}[htb]
\begin{gather*}
\xymatrix@l@C=0pt@R=1em{
cba \ar@{-}[d] && cab \ar@{-}[d] && bca \ar@{-}[d] && bac \ar@{-}[d] & & acb \ar@{-}[d] && abc \ar@{-}[d] \\
 cb \ar@{-}[dr] &&  ca \ar@{-}[dl] &&  bc \ar@{-}[dr] &&  ba \ar@{-}[dl] & &  ac \ar@{-}[dr] &&  ab \ar@{-}[dl] \\
    &c \ar@{-}[drrrr] &    &&    &b\ar@{-}[d]&     & &    &a\ar@{-}[dllll]& \\
    &&     &&    &1&
}
\hspace{3em}
\xymatrix@l@C=1em@R=1em{
& \{a,b,c\} \ar@{-}[dl] \ar@{-}[rd] \ar@{-}[d] \\
\{b,c\} \ar@{-}[d]\ar@{-}[dr] & \{a,c\} \ar@{-}[dl]\ar@{-}[dr] & \{a,b\} \ar@{-}[d]\ar@{-}[dl] \\
\{c\} \ar@{-}[dr]  & \{b\} \ar@{-}[d]  & \{a\} \ar@{-}[dl] \\
 & \emptyset
}
\end{gather*}
\caption{The $\R$-order and the support lattice of $F(\{a,b,c\})$.}
\label{figure: free lrb}
\end{figure}
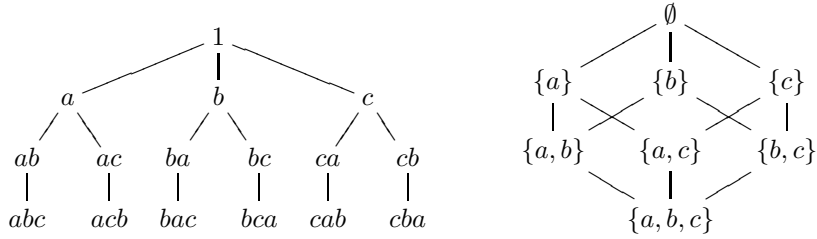

\paragraph{Hyperplane face monoids and oriented matroids}

Hyperplane arrangements, and more generally oriented matroids,
provide an important source of examples of left regular bands,
which turn out to be submonoids of the left regular band $\{0,+,-\}^n$
introduced in Section~\ref{s:LRBs}.
We recall the construction and properties of these left regular bands referring
the reader to~\cite[Appendix~A]{Brown1} for details.

A \emph{central hyperplane arrangement} in $V = \mathbb R^n$ is a finite
collection $\AAA$ of hyperplanes of $V$ passing through the origin. For each
hyperplane $H \in \AAA$, fix a labelling $H^+$ and $H^-$ of the two open half
spaces of $V$ determined by $H$; the choice of labels $H^+$ and $H^-$ is
arbitrary, but fixed throughout. Let $H^0 = H$.

The elements of the monoid are the \emph{faces} of $\AAA$: the non-empty
intersections of the form
$
x = \bigcap_{H \in \AAA} H^{\epsilon_H}
$
with $\epsilon_H \in \{0,+,-\}$.
Thus, a face $x$ is completely determined by the sequence
$\varepsilon(x) = (\epsilon_H)_{H\in\AAA}$.
See Figure~\ref{fig:3Lines}.
The image of $\varepsilon$ identifies the set of faces of $\AAA$
with a submonoid of $\{0,+,-\}^{\AAA}$ and so we
obtain a monoid structure by defining the product of $x$ and $y$
to be the face with sign sequence $\varepsilon(x) \circ \varepsilon(y)$.

%In other words, $xy$ is defined by the property that $xy$ lies: on the same
%side of $H$ as $x$ if $x \not\subseteq H$; on the same side of $H$ as $y$ if $x
%\subseteq H$, but $y \not\subseteq H$; and inside $H$ if $x,y \subseteq H$.
%This product admits an alternative geometric description: $xy$ is the unique
%face---possibly $x$ itself---containing the point obtained by moving a small
%positive distance along a straight line from a point in $x$ toward a point in
%$y$.

The monoid of faces of $\AAA$ is called the
\emph{face monoid} and is denoted $\FFF$.
The lattice $\Lambda(\FFF)$ is isomorphic to
the \emph{intersection lattice} $\LLL$ of $\AAA$; it consists of the subspaces
of $V$ that can be expressed as an intersection of hyperplanes from $\AAA$.
Under this isomorphism, the support map $\sigma$
%$\sigma: \FFF \to \Lambda(\FFF)$
corresponds to the map that sends a face $x$ to the smallest subspace that
contains $x$.

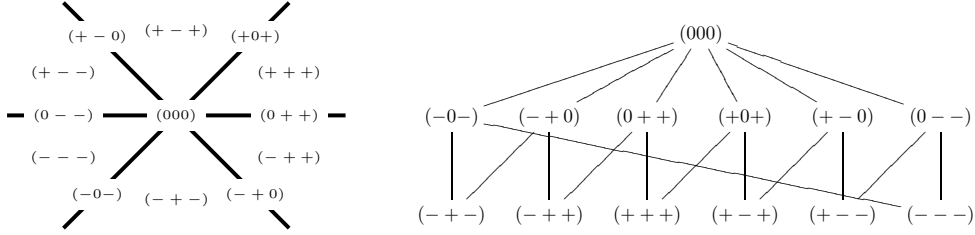
\begin{figure}[htb]
\centering
\begin{tikzpicture}[scale=0.75]
    %\draw[step=0.25cm,color=gray] (-3,-2) grid (3,2);
    \draw[line width=1.5pt] (-2,-2) -- (2,2);
    \draw[line width=1.5pt] (2,-2) -- (-2,2);
    \draw[line width=1.5pt] (-3,0) -- (3,0);
    \draw (0,0)       node[fill=white,font=\tiny] {$(000)$};
    \draw (2,0.75)    node[font=\tiny]           {$(+++)$};
    \draw (2,0)       node[fill=white,font=\tiny] {$(0++)$};
    \draw (2,-0.75)   node[font=\tiny]           {$(-++)$};
    \draw (1.4,-1.4)  node[fill=white,font=\tiny] {$(-+0)$};
    \draw (0,-1.5)    node[font=\tiny]           {$(-+-)$};
    \draw (-1.4,-1.4) node[fill=white,font=\tiny] {$(-0-)$};
    \draw (-2,-0.75)  node[font=\tiny]           {$(---)$};
    \draw (-2,0)      node[fill=white,font=\tiny] {$(0--)$};
    \draw (-2,0.75)   node[font=\tiny]           {$(+--)$};
    \draw (-1.4,1.4)  node[fill=white,font=\tiny] {$(+-0)$};
    \draw (0,1.5)     node[font=\tiny]           {$(+-+)$};
    \draw (1.4,1.4)   node[fill=white,font=\tiny] {$(+0+)$};
\end{tikzpicture}
\qquad
$
\resizebox{3in}{!}{
\xygraph{
!{0;/r18mm/:,p+/u18mm/::}
!~:{@{-}}
{(---)}(
    :[u]{(0--)}
    :[dl]{(+--)}
    :[u]{(+-0)}
    :[dl]{(+-+)}
    :[u]{(+0+)}
    :[dl]{(+++)}
    :[u]{(0++)}
    :[dl]{(-++)}
    :[u]{(-+0)}
    :[dl]{(-+-)}
    :[u]{(-0-)}
    :"(---)"),
"{(-0-)}":[rrru(.85)]{(000)}(:"(-+0)",:"{(0++)}",:"(+0+)",:"(+-0)",:"(0--)")
 )}
}
$
\caption{Sign sequences of the faces of the arrangement in
$\mathbb R^2$ of three distinct lines
and the associated $\R$-order.}
\label{fig:3Lines}
\end{figure}

\paragraph{Free partially commutative left regular bands.}
If $\Gamma=(V,E)$ is a simple graph, then the \emph{free partially commutative left
regular band} associated with $\Gamma$ is the left regular band $B(\Gamma)$ with
presentation
\begin{align*}
    B(\Gamma) = \big\langle V \mid xy=yx \text{~for all edges~} \{x,y\}\in E\big\rangle.
\end{align*}
This is the left regular band analogue of free partially commutative monoids
(also called \emph{trace monoids} or \emph{graph
monoids}~\cite{tracebook,CartierFoata}) and of free partially commutative
groups (also called \emph{right-angled Artin groups} or \emph{graph
groups}~\cite{Wise}).
For example, if $\Gamma$ has no edges,
then $B(\Gamma)$ is the free left regular band on the vertex set,
whereas if $\Gamma$ is a complete graph, then $B(\Gamma)$ is the free
semilattice on the vertex set of $\Gamma$.

The elements of $B(\Gamma)$ correspond to acyclic
orientations of induced subgraphs of the complementary graph $\ov \Gamma$ of
$\Gamma$.
Indeed, any element $w$ of the free left regular band $F(V)$,
which is a repetition-free sequence of vertices,
gives rise to an acyclic orientation $\mathcal O(w)$ as follows:
the vertices of the subgraph are the letters appearing in $w$;
there is an edge $x \to y$ if $x$ comes before $y$ in $w$
and if $\{x,y\}$ is not an edge of $\Gamma$.

\begin{Thm}\label{wordproblem}
Elements $v,w\in F(V)$ are equal in $B(\Gamma)$ if and only if
$\sigma(v)=\sigma(w)$ and $\mathcal O(v)=\mathcal O(w)$.
\end{Thm}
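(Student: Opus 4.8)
The plan is to realise $B(\Gamma)$ as the quotient $F(V)/{\approx}$, where $\approx$ is the congruence on the free left regular band $F(V)$ generated by the pairs $(xy,yx)$ with $\{x,y\}\in E$, and to prove the two implications separately. For the ``only if'' direction I would show that both $v\mapsto\sigma(v)$ and $v\mapsto\mathcal O(v)$ are constant on $\approx$-classes. Since $\approx$ is generated by the listed pairs, it suffices to check that $\sigma(s\cdot x\cdot y\cdot t)=\sigma(s\cdot y\cdot x\cdot t)$ and $\mathcal O(s\cdot x\cdot y\cdot t)=\mathcal O(s\cdot y\cdot x\cdot t)$ for all $s,t\in F(V)$ and every edge $\{x,y\}\in E$, the products being taken in $F(V)$ (concatenate, then delete repeated letters reading from the left). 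The support statement is immediate, both sides having letter set $\sigma(s)\cup\{x,y\}\cup\sigma(t)$. For $\mathcal O$, I would observe that the two reduced words are either literally equal (which happens as soon as $x$ or $y$ already occurs in $s$) or else they agree except that an adjacent pair of letters $x,y$ has been transposed; in the latter case the relative order of every pair of letters is unchanged except for the pair $\{x,y\}$ itself, and since $\{x,y\}\in E$ this pair is \emph{not} an edge of $\ov\Gamma$ and so contributes nothing to the acyclic orientation. Hence $\mathcal O$ is unchanged, and the map $v\mapsto(\sigma(v),\mathcal O(v))$ factors through $B(\Gamma)$; in particular $v=w$ in $B(\Gamma)$ forces $\sigma(v)=\sigma(w)$ and $\mathcal O(v)=\mathcal O(w)$.

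For the ``if'' direction I would argue by induction on $|\sigma(v)|$, which equals $|\sigma(w)|$ since it is the common length of the repetition-free words $v$ and $w$. The base case $\sigma(v)=\emptyset$ forces $v=w=1$. For the inductive step, let $x$ be the first letter of $v$, so that $x$ is a source of $\mathcal O(v)$: no edge of $\ov\Gamma$ is directed into $x$. The crucial point is that this same letter can be brought to the front of $w$ using only commuting relations. Indeed, let $a$ be any letter occurring before $x$ in $w$. If $\{a,x\}\notin E$, then $\{a,x\}$ is an edge of $\ov\Gamma$ and, because $x$ precedes $a$ in $v$, it is oriented $x\to a$ in $\mathcal O(v)=\mathcal O(w)$; but then $x$ precedes $a$ in $w$, a contradiction. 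So every letter preceding $x$ in $w$ is a $\Gamma$-neighbour of $x$, hence commutes with $x$ in $B(\Gamma)$, and repeatedly applying the relation $ax=xa$ yields $w\approx x\,w''$ in $B(\Gamma)$, where $w''$ is $w$ with its single occurrence of $x$ deleted. Writing $v=x\,v''$, we have $\sigma(v'')=\sigma(w'')=\sigma(v)\setminus\{x\}$; and applying the already-proved ``only if'' direction to $w\approx x\,w''$ gives $\mathcal O(w)=\mathcal O(x\,w'')$, whose restriction to the support minus $x$ is $\mathcal O(w'')$, so $\mathcal O(v'')=\mathcal O(v)|_{\sigma(v)\setminus\{x\}}=\mathcal O(w'')$. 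By the induction hypothesis $v''\approx w''$, whence $v=x\,v''\approx x\,w''\approx w$ in $B(\Gamma)$.

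The routine part is the ``only if'' direction — careful but mechanical bookkeeping about the reduction rule for products in $F(V)$. I expect the heart of the argument to be the inductive step of the converse, and in particular the observation that a source of $\mathcal O(v)$ can only be preceded in $w$ by vertices adjacent to it in $\Gamma$, so that it commutes past all of them. This is the left-regular-band analogue of the classical fact that a trace is determined by its projections to the two-letter sub-alphabets, with the extra feature that here one must also record the support $\sigma(v)$, since the projection argument alone does not detect which letters occur. One final point to keep in mind is that the induction is legitimate: we only ever invoke the hypothesis for pairs of words of strictly smaller support, and all relations used take place inside $B(\Gamma)$ itself.
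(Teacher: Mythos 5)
Your argument is correct and complete: the invariance of $\sigma$ and $\mathcal O$ under the generating commutations, together with the induction that peels off the first letter of $v$ (a source of $\mathcal O(v)$, hence commutable to the front of $w$), is exactly the left-regular-band analogue of the standard normal-form argument for trace monoids. The paper itself states Theorem~\ref{wordproblem} without proof, deferring details to the companion papers, so there is nothing to compare against; your route is the natural one and I see no gaps.
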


The Tsetlin library Markov chain can be modelled as a hyperplane random walk,
but is most naturally a random walk on the free left regular band, see~\cite{Brown1,Brown2}.
Similarly, the random walk on acyclic orientations of a graph considered by
Athanasiadis and Diaconis as a function of a hyperplane
walk~\cite{DiaconisAthan} is most naturally a random walk on a free partially
commutative left regular band.

\paragraph{Other examples.}

There are numerous other examples of left regular bands appearing in disguise
in the algebraic combinatorics literature and elsewhere, as well as numerous
examples of left regular band \emph{semigroups}. These include the face
semigroup of an affine hyperplane arrangement, the set of covectors
associated with an affine oriented matroid or a $T$-convex set of topes and the
face semigroup of a finite $\mathrm{CAT}(0)$ cube complex. These are presented
in detail in~\cite{oldpaper, newpaper}.

\section{Statement of the Main Result}
\label{s:mainresult}

Our main result expresses certain algebraic invariants of $B$ in terms of the
cohomology of simplicial complexes associated with its $\R$-poset.
This allows us to compute these algebraic invariants using the combinatorics of
the poset, which we do for certain classes of left regular bands in the next
section.

%\paragraph{Algebraic invariants.}

We begin by describing the algebraic invariants.
Fix a commutative ring with unit $\Bbbk$.
For every element $X$ of the support lattice $\Lambda(B)$, the ring $\Bbbk$
becomes a $\Bbbk B$-module via the action
\begin{align*}
    b \cdot \alpha =
    \begin{cases}
        \alpha, & \text{if}\ \sigma(b)\geq X,\\
        0, & \text{otherwise},
    \end{cases}
\end{align*}
for all $b \in B$ and $\alpha \in \Bbbk$.
These modules, denoted by $\Bbbk_X$, are precisely the simple $\Bbbk B$-modules when
$\Bbbk$ is a field~\cite{Brown1, Saliola}.
%
%It develops that a deep understanding of $\Ext^n$ is unnecessary in this case,
%since our main result allows one to compute them using combinatorial techniques.
%(That said, the proof of the main result requires a certain degree of
%familiarity with homological algebra.)
We are interested in computing the algebraic invariants
$\Ext^n_{\Bbbk B}(\Bbbk_X,\Bbbk_Y)$ for two such modules~\cite{assem}.
It turns out that these invariants coincide precisely with
the cohomology of certain simplicial complexes associated with $B$,
so a detailed description of $\Ext^n$ is not necessary for what follows.
Indeed, the power of the main result resides in the fact that we can compute
these algebraic invariants by making use of combinatorial properties of $B$.

For elements $X$ and $Y$ of the support lattice $\Lambda(B)$,
pick an element $y \in B$ with $\sigma(y) = Y$ and define
\begin{gather*}
    B[X,Y] = \big\{ z \in B : X \leq \sigma(z) \text{~and~} z \leq_\R y \big\}.
\end{gather*}
Passing from $B$ to the monoid $B[X,Y]$ corresponds to the
restriction--contraction operation for oriented matroids.
This is a subposet of $B$ with respect to Green's $\R$-order. Let $\Delta(X,Y)$
be the \emph{order complex} of the poset $B[X,Y] \setminus \{y\}$:
it is the simplicial complex whose vertex set is $B[X,Y] \setminus \{y\}$ and
whose simplices are the finite chains in the poset. (Up to isomorphism, this poset does not depend on the choice of $y\in Y$.)

\begin{Thm}\label{mainresult}
Let $B$ be a finite left regular band,
$\Bbbk$ a commutative ring with unit,
and $X,Y\in \Lambda(B)$.
Then
\begin{align*}
    \Ext^n_{\Bbbk B}(\Bbbk_X,\Bbbk_Y) =
    \begin{cases}
        \til H^{n-1}(\Delta(X,Y),\Bbbk), & \text{if}\ X<Y,\ n\geq 1,\\
        \Bbbk, &\text{if}\ X=Y,\ n=0,\\
        0,  & \text{otherwise.}
    \end{cases}
\end{align*}
\end{Thm}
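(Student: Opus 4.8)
The plan is to construct an explicit projective resolution of the simple module $\Bbbk_X$ over $\Bbbk B$ whose terms are built from the $\R$-order on $B$, and then to apply $\Hom_{\Bbbk B}(-,\Bbbk_Y)$ and identify the resulting cochain complex with the (reduced, shifted) simplicial cochain complex of the order complex $\Delta(X,Y)$. The starting point is the observation that $\Bbbk B$ is spanned by idempotents and that, for each $b\in B$, the right ideal $b\,\Bbbk B$ is projective; more precisely, since $B$ is a left regular band, the projection $b\mapsto$ ``the principal right ideal $bB$'' organizes $\Bbbk B$ into a filtration indexed by the $\R$-order. First I would recall (from \cite{Brown1, Saliola}) the structure of the projective indecomposables: for each $X\in\Lambda(B)$ one has a primitive idempotent $e_X$ (obtained by an inclusion--exclusion/Möbius construction over $\Lambda(B)$) with $e_X\,\Bbbk B$ the projective cover of $\Bbbk_X$, and the radical series is governed by $\Lambda(B)$. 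The key combinatorial input is that the ``layers'' of these projectives are indexed precisely by chains in $B$ lying over a fixed element, which is exactly the data entering $\Delta(X,Y)$.

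The heart of the argument is the construction of the resolution. For a fixed $X$, I would take the chain complex $P_\bullet$ with
\[
    P_n \;=\; \bigoplus \Bbbk B\,e_{\sigma(z_0)}
\]
summed over strictly increasing chains $z_0 <_\R z_1 <_\R \cdots <_\R z_n$ in a suitable subposet of $B$ determined by $X$ (chains of faces whose supports all dominate $X$, anchored appropriately), with differential the usual alternating sum of face maps $\partial_i$ deleting $z_i$, composed with the structure maps $e_{\sigma(z_i)}\Bbbk B \to e_{\sigma(z_{i-1})}\Bbbk B$ coming from the $\R$-order $z_{i-1}\leq_\R z_i$ (i.e.\ right multiplication, using $z_{i-1}z_i = z_{i-1}$ in the band). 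Exactness of $P_\bullet \to \Bbbk_X \to 0$ would be proved by filtering by the support lattice: after applying $\sigma$, each graded piece becomes (a shift of) the simplicial/bar chain complex of a poset that has a cone point (a maximum or minimum), hence is acyclic; this is the standard ``nerve is contractible when there is a top element'' trick, transported into the module category. This reduces global exactness to a collection of contractibility statements about intervals in $B$, each of which has an initial or terminal object by construction of $B[X,Y]$.

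Having the resolution, I would apply $\Hom_{\Bbbk B}(-,\Bbbk_Y)$. Since $\Hom_{\Bbbk B}(\Bbbk B\,e_{\sigma(z)},\Bbbk_Y) = e_{\sigma(z)}\Bbbk_Y$, which by the definition of the module $\Bbbk_Y$ (action through $\sigma$) is $\Bbbk$ exactly when $\sigma(z) \geq Y'$ for the appropriate comparison and $0$ otherwise — the surviving summands are exactly the chains in $B[X,Y]\setminus\{y\}$ — the cochain complex computing $\Ext^\bullet_{\Bbbk B}(\Bbbk_X,\Bbbk_Y)$ becomes the simplicial cochain complex of $\Delta(X,Y)$, shifted by one (a chain $z_0<\cdots<z_n$ is an $n$-simplex but sits in homological degree $n$, and the empty chain / augmentation accounts for the reduced cohomology and the shift $n\mapsto n-1$). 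The case $X=Y$ gives $\Hom$ in degree $0$ equal to $\Bbbk$ and nothing higher (the complex is the augmented chain complex of a point), and $X\not\leq Y$ makes $B[X,Y]$ empty so everything vanishes; matching signs and the degree shift with the statement is then bookkeeping.

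\textbf{Main obstacle.} The delicate step is verifying exactness of $P_\bullet$, i.e.\ that the chain complex built from $\R$-chains really is a projective resolution and not merely a complex mapping onto $\Bbbk_X$. The natural route is to filter $\Bbbk B$ by the ideals $\Bbbk B_{\geq X} := \Bbbk\{b : \sigma(b)\geq X\}$ and argue degree-by-degree in the associated graded, where the differentials degenerate into honest simplicial boundary maps of order complexes of intervals possessing a cone point; the subtlety is choosing the filtration and the anchoring element $y$ so that every graded piece is genuinely contractible (has a $\leq_\R$-top or bottom) and so that the projectivity of each $P_n$ — which rests on $B$ being a left regular band, via the splitting $\Bbbk B \cong \bigoplus_{X} e_X\Bbbk B$ at the level of right modules — is preserved. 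I expect that handling the boundary cases (the augmentation term, and the chains touching $y$ itself) and pinning down all the signs in the identification with $\til H^{n-1}(\Delta(X,Y),\Bbbk)$ is where most of the real work lies; the rest is an application of standard homological algebra and the structure theory of \cite{Brown1, Saliola} recalled above.
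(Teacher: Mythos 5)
Your overall architecture --- resolve $\Bbbk_X$ by projectives indexed by chains in the $\R$-order above $X$, apply $\Hom_{\Bbbk B}(-,\Bbbk_Y)$, and recognize the reduced cochain complex of $\Delta(X,Y)$ shifted by one --- is the same as the paper's; this is exactly the role of Theorem~\ref{t:projectiveres}. But the resolution you write down is not the one that makes this strategy work, and the step you yourself flag as delicate is precisely where it breaks. The paper's resolution is the augmented simplicial chain complex $C_\bullet(\Delta(B_{\geq X});\Bbbk)\to\Bbbk_X$ itself: the $n$-th term is the free $\Bbbk$-module on the $n$-chains of $B_{\geq X}$, with $B$ acting by left multiplication (chains that become degenerate or leave $B_{\geq X}$ are sent to $0$). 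With that choice, exactness costs nothing: as a complex of $\Bbbk$-modules it is the augmented chain complex of $\Delta(B_{\geq X})$, which is a cone on the identity and hence contractible. The genuinely nontrivial point, which your proposal never confronts, is that these chain modules are \emph{projective} over $\Bbbk B$ --- that is where the left regular band identities enter. You instead set $P_n=\bigoplus\Bbbk B\,e_{\sigma(z_0)}$ over chains, which makes projectivity trivial but leaves exactness genuinely open; your filtration-by-support sketch is a plan, not an argument, and your terms do not even have the same underlying size as the chain modules (already in degree $0$, $\bigoplus_{z}\Bbbk B e_{\sigma(z)}$ is generally strictly larger than $\Bbbk B_{\geq X}$).

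The second gap is that the $\Hom$ computation is wrong as stated. Since $\Bbbk_Y$ is simple and $e_Z$ is a primitive idempotent, $\Hom_{\Bbbk B}(\Bbbk B e_Z,\Bbbk_Y)\cong e_Z\Bbbk_Y$ equals $\Bbbk$ if and only if $Z=Y$, not whenever $\sigma(z)$ dominates $Y$. So the summands of your complex surviving $\Hom(-,\Bbbk_Y)$ are the chains whose distinguished element has support \emph{exactly} $Y$; this is not the set of simplices of $\Delta(X,Y)$, and in particular the element $y$ and the poset $B[X,Y]\setminus\{y\}$ never actually enter your computation. In the paper's approach, identifying $\Hom_{\Bbbk B}\bigl(C_n(\Delta(B_{\geq X});\Bbbk),\Bbbk_Y\bigr)$ with the $(n-1)$-cochains of $\Delta(X,Y)$ is exactly where $B[X,Y]$ and the choice of $y$ appear, and it is real work rather than sign bookkeeping. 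To repair the proposal you should replace your $P_\bullet$ by the chain modules of $\Delta(B_{\geq X})$, prove their projectivity, and then do the $\Hom$ identification honestly.
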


\section{Applications of the Main Result}
\label{s:applications}

%In the various applications, we focus on properties of left regular bands that
%allow us to compute the cohomology of $\Delta(X, Y)$.

\subsection{Quiver with relations of a left regular band algebra}

We begin by investigating applications of the degree $0$ and degree $1$
cohomology.
They encode information about the structure of the monoid algebra $\Bbbk B$
and allows us to describe a \emph{quiver presentation} of $\Bbbk B$.

\begin{figure}[htb]
    \centering
    $\vcenter{
    \xymatrix@l@C=0pt@R=1em{
    cba \ar@{-}[d] && cab \ar@{-}[d] && bca \ar@{-}[d] && bac \ar@{-}[d] & & acb \ar@{-}[d] && abc \ar@{-}[d] \\
     cb \ar@{-}[dr] &&  ca \ar@{-}[dl] &&  bc \ar@{-}[dr] &&  ba \ar@{-}[dl] & &  ac \ar@{-}[dr] &&  ab \ar@{-}[dl] \\
        &c \ar@{-}[drrrr] &    &&    &b\ar@{-}[d]&     & &    &a\ar@{-}[dllll]& \\
        &&     &&    &1&
    }}$
    \hspace{.5in}
    $\vcenter{
    \xymatrix@l@M=1pt{
      & \{a,b,c\}\ar@/^7ex/[ddd] \ar@/_7ex/[ddd] \ar@/_1.5ex/[ddl] \ar@/^1.5ex/[rdd]
    \ar@<-0.4ex>@/^4ex/[dd] \\
    \{a,b\} \ar@/_2ex/[ddr] & \ \{a,c\} \ar@<0.4ex>@/_4ex/[dd] & \{b,c\} \ar@/^2ex/[ddl] \\
    \{a\} & \{b\} \ & \{c\} \\
     & \emptyset
    }}
    $
\caption{The $\R$-order on $F(\{a,b,c\})$ and the quiver of $\Bbbk F(\{a,b,c\})$.}
\label{fig:F3quiver}
\end{figure}

\paragraph{Quiver.}
Assume for the moment that $\Bbbk$ is a field.
Then the dimension of the degree $0$ cohomology of $\Delta(X,Y)$
counts its number of connected components~\cite{Wachs}.
Combined with Theorem~\ref{mainresult}, we obtain a combinatorial
interpretation of the dimension of $\Ext^1_{\Bbbk B}(\Bbbk_X, \Bbbk_Y)$.
The dimensions of these spaces carry all the information needed to construct
the quiver of an algebra~\cite{assem}, which yields the following new
description for the quiver of $\Bbbk B$. This is more conceptual and
sometimes easier to apply than~\cite{Saliola}.

\begin{Thm}\label{quiver}
Let $B$ be a finite left regular band and $\Bbbk$ a field.  Then the quiver of
$\Bbbk B$ has vertex set $\Lambda(B)$.  The number of arrows from $X$ to $Y$ is
zero unless $X<Y$, in which case it is one less than the number of connected
components of $\Delta(X,Y)$.
\end{Thm}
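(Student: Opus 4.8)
The plan is to reduce Theorem~\ref{quiver} to Theorem~\ref{mainresult} through the standard homological description of the quiver of a finite-dimensional algebra. Since $B$ is finite, $\Bbbk B$ is a finite-dimensional $\Bbbk$-algebra. Because (by~\cite{Brown1,Saliola}) a complete irredundant list of simple $\Bbbk B$-modules is given by the one-dimensional modules $\Bbbk_X$ with $X\in\Lambda(B)$, the algebra $\Bbbk B$ is basic, with simple modules indexed by $\Lambda(B)$. By the general theory of basic algebras~\cite{assem}, the quiver of $\Bbbk B$ then has vertex set $\Lambda(B)$, and the number of arrows from $X$ to $Y$ equals $\dim_\Bbbk\Ext^1_{\Bbbk B}(\Bbbk_X,\Bbbk_Y)$. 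This already yields the assertion about the vertex set, and reduces the claim about arrows to computing this dimension.

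Next I would specialize Theorem~\ref{mainresult} to $n=1$. If $X<Y$, it gives $\Ext^1_{\Bbbk B}(\Bbbk_X,\Bbbk_Y)\cong\til H^0(\Delta(X,Y),\Bbbk)$; in every remaining case---namely $X=Y$, or $X$ and $Y$ incomparable, or $X>Y$---the ``otherwise'' clause of Theorem~\ref{mainresult} applies (as $n=1\neq 0$) and $\Ext^1_{\Bbbk B}(\Bbbk_X,\Bbbk_Y)=0$. Hence there are no arrows from $X$ to $Y$ unless $X<Y$, and when $X<Y$ the number of arrows equals $\dim_\Bbbk\til H^0(\Delta(X,Y),\Bbbk)$.

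It then remains to identify $\dim_\Bbbk\til H^0(\Delta(X,Y),\Bbbk)$ with $c(\Delta(X,Y))-1$, where $c(\Delta)$ denotes the number of connected components of $\Delta$. For any nonempty simplicial complex one has $\dim_\Bbbk\til H^0(\Delta,\Bbbk)=c(\Delta)-1$~\cite{Wachs}, so it suffices to check that $\Delta(X,Y)$ is nonempty whenever $X<Y$. Choosing $b\in B$ with $\sigma(b)=X$ and $y\in B$ with $\sigma(y)=Y$, put $z=yb$. Then $\sigma(z)=\sigma(y)\wedge\sigma(b)=Y\wedge X=X$, since $X\leq Y$, so that $X\leq\sigma(z)$; moreover $yz=(yy)b=yb=z$, so $z\leq_\R y$; and $z\neq y$, for $yb=y$ would force $\sigma(b)\geq\sigma(y)$, i.e.\ $X\geq Y$, contradicting $X<Y$. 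Hence $z\in B[X,Y]\setminus\{y\}$, so $\Delta(X,Y)$ has at least one vertex, and combining with the two preceding paragraphs yields precisely the asserted count of arrows from $X$ to $Y$.

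I do not expect a serious obstacle once Theorem~\ref{mainresult} is available: all of the content sits in that theorem, and the remaining work is bookkeeping---checking that the ``otherwise'' case correctly kills $\Ext^1$ for $X=Y$ and for incomparable $X,Y$, and dispatching the potentially degenerate order complex via the nonemptiness verification so that ``number of connected components minus one'' is always a well-defined nonnegative integer.
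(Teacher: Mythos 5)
Your argument is exactly the one the paper intends: the quiver's arrow counts are the dimensions of $\Ext^1_{\Bbbk B}(\Bbbk_X,\Bbbk_Y)$ for the (one-dimensional, hence split basic) simples $\Bbbk_X$, and Theorem~\ref{mainresult} at $n=1$ converts these to $\dim_\Bbbk\til H^0(\Delta(X,Y),\Bbbk)$, i.e.\ one less than the number of connected components. Your extra verification that $\Delta(X,Y)\neq\emptyset$ when $X<Y$ is a worthwhile detail the paper glosses over (it is needed for ``one less than the number of components'' to be literally correct), and it is carried out correctly.
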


The quiver of $\Bbbk F_3$ is depicted in Figure~\ref{fig:F3quiver};
for instance, there are two arrows from the bottom to the top since
there are three connected components in the Hasse diagram of the
$\R$-order on $F_3 \setminus \{1\}$.
Corollary~\ref{freelrb} provides a description of the quiver of
$F_n$ for all $n \geq 0$.

\paragraph{Quiver relations.}

One can think of a quiver as a combinatorial encoding of a special
presentation of the algebra by generators and relations: the quiver is
a directed graph whose vertices correspond to primitive orthogonal idempotents
and whose arrows correspond to a minimal generating set of the radical.
Relations take the form of linear combinations of paths of the quiver~\cite{assem}.
A result of Bongartz~\cite{assem,Bongartz} implies that the number of relations
involving paths between two vertices in a minimal quiver presentation
for a left regular band algebra is determined by the dimension of $\Ext^2$.
Theorem~\ref{mainresult} admits the following corollary.

\begin{Cor}
    \label{cor:minrelations}
    If $X<Y$ in $\Lambda(B)$, then the number of relations involving paths from
    $X$ to $Y$ in a minimal quiver presentation of a left regular band algebra
    $\Bbbk B$ is given by $\dim_{\Bbbk} \til H^1(\Delta(X,Y),\Bbbk)$.
\end{Cor}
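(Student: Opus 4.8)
The plan is to derive the corollary by feeding the case $n=2$ of Theorem~\ref{mainresult} into the standard representation-theoretic dictionary between $\Ext^2$ and quiver relations (Bongartz's theorem). Throughout one assumes, as in the surrounding discussion, that $\Bbbk$ is a field; this is needed for the notion of a minimal quiver presentation to make sense.

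First I would recall why $\Bbbk B$ admits a genuine quiver presentation. By the discussion preceding Theorem~\ref{mainresult}, the simple $\Bbbk B$-modules are the one-dimensional modules $\Bbbk_X$, indexed by $X\in\Lambda(B)$, and they are pairwise non-isomorphic. Hence $\Bbbk B/\rad(\Bbbk B)\cong\prod_{X\in\Lambda(B)}\Bbbk$, so $\Bbbk B$ is a finite-dimensional, basic, split $\Bbbk$-algebra. For such algebras the standard theory (see~\cite{assem}) gives an isomorphism $\Bbbk B\cong\Bbbk Q/I$, where $Q$ is the quiver described in Theorem~\ref{quiver}, whose vertex set is identified with the set of isomorphism classes of simple modules, hence with $\Lambda(B)$, and $I$ is an admissible ideal.

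Next I would invoke Bongartz's theorem~\cite{assem,Bongartz}: a minimal set of generators of the admissible ideal $I$ may be chosen to consist of \emph{uniform} elements --- each a $\Bbbk$-linear combination of paths sharing a common source and a common target --- and the number of generators in such a set with source $X$ and target $Y$ is independent of all choices and equals $\dim_{\Bbbk}\Ext^2_{\Bbbk B}(\Bbbk_X,\Bbbk_Y)$. This is precisely the sense in which ``the number of relations involving paths from $X$ to $Y$ in a minimal quiver presentation'' is well-defined, so the corollary is reduced to computing $\dim_{\Bbbk}\Ext^2_{\Bbbk B}(\Bbbk_X,\Bbbk_Y)$. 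Applying Theorem~\ref{mainresult} with $n=2$ and $X<Y$ gives $\Ext^2_{\Bbbk B}(\Bbbk_X,\Bbbk_Y)\cong\til H^1(\Delta(X,Y),\Bbbk)$, and taking $\Bbbk$-dimensions yields the stated formula.

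I do not expect a genuine obstacle here, since all the real content sits in Theorem~\ref{mainresult}. The only points requiring care are bookkeeping ones: verifying that $\Bbbk B$ is basic and split so that the quiver-with-relations formalism and Bongartz's theorem apply over an arbitrary field --- this is exactly where one-dimensionality of the $\Bbbk_X$ is used, so that no algebraic-closedness hypothesis is needed --- and matching the indexing of the vertices of $Q$ with $\Lambda(B)$ consistently with the conventions already fixed in Theorem~\ref{quiver}.
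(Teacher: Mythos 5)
Your proposal is correct and follows exactly the route the paper takes: Bongartz's theorem identifies the number of relations from $X$ to $Y$ in a minimal quiver presentation with $\dim_{\Bbbk}\Ext^2_{\Bbbk B}(\Bbbk_X,\Bbbk_Y)$, and Theorem~\ref{mainresult} with $n=2$ converts this to $\dim_{\Bbbk}\til H^1(\Delta(X,Y),\Bbbk)$. The added verification that $\Bbbk B$ is split basic (so the formalism applies over any field) is a reasonable bookkeeping point that the paper leaves implicit.
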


%An algebra for which $\Ext^2$ vanishes is called \emph{hereditary}.
%This condition is equivalent to the property that each submodule of
%a projective module is projective; and, by the above, to the property that
%there are no relations in a quiver presentation of the algebra.
%As an immediate corollary of the theorem, we obtain a characterization of those
%monoid algebras $\Bbbk B$ that are hereditary in terms of the order complex of
%$B$.

\subsection{Algebras of free left regular bands are hereditary}
\label{ss:righthereditary}

A finite dimensional algebra whose left ideals are projective modules is called
\emph{hereditary}.
This is equivalent to the property that $\Ext^2$ vanishes;
to the property that each submodule of a projective module is projective;
and to the property that the global dimension of the algebra is at most $1$. A result of Gabriel says that a split basic algebra is hereditary precisely when it has a quiver presentation with an acyclic quiver and no relations~\cite{assem}.
In section~\ref{ss:gldim} we will see a characterization of those left regular band algebras
$\Bbbk B$ that are hereditary in terms of the order complex of $B$, but we
foreshadow it by showing that the algebra of the free left regular band is
hereditary.

The Hasse diagram of the $\R$-order of the free left regular
band $F_n$ is a tree, as are the diagrams of the subposets $F_n[X,Y]$.
Consequently, the cohomology of the order complexes $\Delta(X,Y)$ vanishes in
positive degrees.
Theorem~\ref{quiver} and Corollary~\ref{cor:minrelations}
combine to show that $\Bbbk F_n$ is hereditary
and to describe its quiver.

\begin{Cor}\label{freelrb}
The algebra $\Bbbk F_n$ is hereditary over any field $\Bbbk$. Its quiver has
vertex set the subsets of $\{1,\ldots,n\}$.  If $X\supsetneq Y$, then there are
$|X\setminus Y|-1$ arrows from $X$ to $Y$.  There are no other arrows.
\end{Cor}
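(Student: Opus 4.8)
The paragraph preceding the statement already sketches the argument, and the plan is to make it precise; everything reduces to identifying the poset $F_n[X,Y]\setminus\{y\}$. Fix $X<Y$ in $\Lambda(F_n)$. Under the identification of $\Lambda(F_n)$ with the power set of $\{1,\dots,n\}$ described in Section~\ref{s:LRBExamples}, this means $X\supsetneq Y$ as subsets, the support map is $w\mapsto\supp(w)$, the $\Lambda$-order is reverse inclusion, $\wh 1=\emptyset$, and $\wh 0=\{1,\dots,n\}$; recall also that in the $\R$-order of a free left regular band one has $u\le_\R v$ if and only if $v$ is a prefix of $u$. Pick a reduced word $y$ with $\supp(y)=Y$. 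The conditions defining $F_n[X,Y]$ then unwind as follows: $y\le_\R z$ says that $y$ is a prefix of $z$, which forces $\supp(z)\supseteq Y$, while $X\le\sigma(z)$ says $\supp(z)\subseteq X$. Hence $z\in F_n[X,Y]$ exactly when $z=yw$ for a reduced word $w$ on the alphabet $X\setminus Y$, with $z=y$ corresponding to the empty $w$. Since two such words satisfy $yw\le_\R yw'$ iff $w'$ is a prefix of $w$, the assignment $yw\mapsto w$ is an isomorphism of posets from $F_n[X,Y]\setminus\{y\}$ onto the $\R$-poset of nonempty reduced words on $X\setminus Y$, that is, onto $F(X\setminus Y)\setminus\{1\}$.

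Next I would read off the topology of $\Delta(X,Y)$ from this. In $F(X\setminus Y)\setminus\{1\}$ two words are $\R$-comparable only if they begin with the same letter, so this poset is the disjoint union, over $a\in X\setminus Y$, of the subposets $P_a$ of reduced words beginning with $a$; and $a$ is the $\le_\R$-largest element of $P_a$, being a prefix of every word in $P_a$. Therefore $\Delta(X,Y)$ is a disjoint union of $|X\setminus Y|$ order complexes, each of which is a cone with apex the vertex $a$ and hence contractible. It follows that $\Delta(X,Y)$ has exactly $|X\setminus Y|$ connected components, that $\til H^0(\Delta(X,Y),\Bbbk)\cong\Bbbk^{|X\setminus Y|-1}$, and that $\til H^m(\Delta(X,Y),\Bbbk)=0$ for all $m\ge 1$. (Equivalently: $F_n[X,Y]$ is itself a rooted tree with root $y$, and deleting the root leaves a forest with $|X\setminus Y|$ components.)

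The three assertions then follow formally. Combining the vanishing $\til H^m(\Delta(X,Y),\Bbbk)=0$ for $m\ge 1$ with Theorem~\ref{mainresult} gives $\Ext^n_{\Bbbk F_n}(\Bbbk_X,\Bbbk_Y)=0$ for every $n\ge 2$ and every pair $X,Y$ (the remaining cases of the theorem vanish as well), so $\Bbbk F_n$ has global dimension at most $1$ and is hereditary; alternatively one may invoke Corollary~\ref{cor:minrelations} (no relations, since $\til H^1(\Delta(X,Y),\Bbbk)=0$) together with the acyclicity of the quiver and Gabriel's theorem. For the quiver, Theorem~\ref{quiver} gives vertex set $\Lambda(F_n)$, which is the set of subsets of $\{1,\dots,n\}$, and says the number of arrows from $X$ to $Y$ is $0$ unless $X<Y$ in $\Lambda(F_n)$, in which case it is one less than the number of connected components of $\Delta(X,Y)$. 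By the previous paragraph this is $|X\setminus Y|-1$ when $X\supsetneq Y$, and there are no arrows otherwise.

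The only delicate point is the first paragraph: one must keep the two opposing conventions straight — the $\R$-order is the reverse of the prefix order, while the $\Lambda$-order on $\Lambda(F_n)$ is reverse inclusion — and check carefully that the two defining inequalities of $B[X,Y]$ really do confine $z$ to the words $yw$ with $w$ reduced on $X\setminus Y$. Once that identification is in place, the rest is routine poset topology together with direct citations of Theorems~\ref{mainresult} and~\ref{quiver} and Corollary~\ref{cor:minrelations}.
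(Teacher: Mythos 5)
Your proof is correct and takes essentially the same route as the paper, which simply observes that the Hasse diagrams of the subposets $F_n[X,Y]$ are trees (so the order complexes $\Delta(X,Y)$ are disjoint unions of cones, with cohomology vanishing in positive degrees) and then invokes Theorem~\ref{quiver} and Corollary~\ref{cor:minrelations}. Your identification of $F_n[X,Y]\setminus\{y\}$ with $F(X\setminus Y)\setminus\{1\}$ merely makes that one-line observation precise and supplies the component count $|X\setminus Y|$.
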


This result was first proved by K.~Brown using quivers and a counting
argument~\cite[Theorem~13.1]{Saliola}.
Our argument for heredity generalizes to any left regular band whose $\R$-order
is a tree; we call such left regular bands \emph{right hereditary}.
These include
the matroid left regular bands of Brown~\cite{Brown1},
the interval greedoid left regular bands of Bj\"orner~\cite{bjorner2},
and the Rhodes and Karnofsky-Rhodes expansions of a lattice~\cite{oldpaper}.
We will see in Theorem~\ref{treethm} another, more transparent, proof of
Corollary~\ref{freelrb}, which also makes it simple to compute the quiver of
a right hereditary left regular band.

%The reason for the terminology ``right hereditary'' is that a left regular band
%$B$ is right hereditary if and only if each right ideal of $B$ is projective in
%the category of right $B$-sets~\cite{hereditaryactspaper}.
%This leads to an amusing coincidence of terminology: the algebra of a right
%hereditary, left regular band over a field is hereditary in the ring theoretic
%sense!

\subsection{Hyperplane face monoids}
\label{ss:hyperplanemonoids}

Recall that $\AAA$ denotes a central hyperplane arrangement in
a $d$-dimensional real vector space $V$, $\LLL$ its intersection lattice, and
$\FFF$ its left regular band of faces.
Without loss of generality, we suppose that the intersection of all the
hyperplanes in $\AAA$ is the origin: otherwise quotient $V$ by this
intersection.

We argue that $\Delta(\wh0,\wh1)$ is a $(d-1)$-sphere. The $\R$-order on $\FFF$
can be described geometrically as $y \leq x$ if and only if $x \subseteq
\overline y$, where $\overline y$ denotes the set-theoretic closure of $y$.
This establishes an order-reversing bijection between the faces $\FFF$ and the
cells of the regular cell decomposition $\varSigma$ obtained by intersecting
the hyperplane arrangement with a sphere centered at the origin. The dual of
$\varSigma$ is the boundary of a polytope $Z$ so that
the poset of faces of $Z$ is isomorphic to $\FFF$
\cite{DiaconisBrown1}. Since the order complex of the poset of
faces of a polytope is the barycentric subdivision of the polytope, it follows
that $\Delta(\wh0,\wh1)$ is a $(d-1)$-sphere.

This argument also applies to $\Delta(X,Y)$ with $X \leq Y$ since it
corresponds to the hyperplane arrangement in $X$ obtained by
intersecting $X$ with the hyperplanes in $\AAA$ containing $Y$ and not $X$.
It follows that $\Delta(X,Y)$ is a sphere of dimension $\dim(X)-\dim(Y)-1$.
Consequently, we recover Lemma~8.3 of~\cite{Saliolahyperplane}.

\begin{Prop}
\label{ext-spaces}
For $X, Y \in \LLL$ and $n \geq 0$,
\[
\Ext^n_{\Bbbk\FFF}(\Bbbk_X, \Bbbk_Y) \cong
\begin{cases}
\Bbbk, & \text{if } Y \subseteq X \text{ and } \dim(X) - \dim(Y) = n, \\
0, & \text{otherwise}.
\end{cases}
\]
\end{Prop}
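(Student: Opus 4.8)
The plan is to deduce Proposition~\ref{ext-spaces} directly from Theorem~\ref{mainresult} together with the sphere computation carried out in the preceding paragraphs. Recall that for a hyperplane arrangement $\AAA$, the support lattice $\Lambda(\FFF)$ is the intersection lattice $\LLL$, and the order on $\LLL$ is reverse inclusion of subspaces; so $X < Y$ in $\Lambda(\FFF)$ means $Y \subsetneq X$ as subspaces. First I would handle the diagonal and the incomparable cases: when $X = Y$, Theorem~\ref{mainresult} gives $\Ext^0 = \Bbbk$ and $\Ext^n = 0$ for $n \geq 1$, matching the right-hand side since $\dim(X) - \dim(Y) = 0$; when $X$ and $Y$ are incomparable in $\Lambda(\FFF)$ (equivalently $Y \not\subseteq X$), Theorem~\ref{mainresult} gives $0$ in all degrees, again matching. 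So the only substantive case is $X < Y$ in $\Lambda(\FFF)$, i.e. $Y \subsetneq X$.

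In that case Theorem~\ref{mainresult} says $\Ext^n_{\Bbbk\FFF}(\Bbbk_X,\Bbbk_Y) = \til H^{n-1}(\Delta(X,Y),\Bbbk)$ for $n \geq 1$, and $\Ext^0 = 0$. I would then invoke the geometric analysis given just above the Proposition: $\Delta(X,Y)$ is the order complex of the faces (minus top) of the induced arrangement obtained by intersecting the subspace $X$ with those hyperplanes of $\AAA$ that contain $Y$ but not $X$. This induced arrangement lives in $X$, has the origin of $X$ as the intersection of all its hyperplanes (after the reduction described: the hyperplanes containing $Y$ cut $X$ down to $Y$), and the argument via the dual polytope $Z$ and barycentric subdivision shows $\Delta(X,Y)$ is a sphere of dimension $\dim(X) - \dim(Y) - 1$. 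The reduced cohomology of an $m$-sphere is $\Bbbk$ in degree $m$ and $0$ otherwise, so $\til H^{n-1}(\Delta(X,Y),\Bbbk) = \Bbbk$ precisely when $n - 1 = \dim(X) - \dim(Y) - 1$, that is $n = \dim(X) - \dim(Y)$, and $0$ otherwise. Combined with $\Ext^0 = 0$ (consistent, since $\dim(X)-\dim(Y) \geq 1$ here), this is exactly the claimed formula, translating $Y \subsetneq X$ back to the statement's hypothesis $Y \subseteq X$ with $\dim(X) - \dim(Y) = n$ (the case $n=0$ of that hypothesis being the diagonal already treated).

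The only genuine obstacle is making the sphere identification of $\Delta(X,Y)$ fully rigorous, in particular justifying that the restriction–contraction passes $B[X,Y]$ to the face monoid of the induced arrangement in $X$ and that the order-reversing bijection between faces and cells of the sphere cell decomposition behaves correctly after removing the top element $y$. This is precisely the content sketched in the paragraph preceding the Proposition (and is Lemma~8.3 of~\cite{Saliolahyperplane}), so I would either cite that lemma or expand the sketch: identify $B[X,Y]\setminus\{y\}$ with the proper part of the face poset of the dual polytope $Z$ of the induced arrangement, note that the order complex of the proper part of a polytope's face lattice is the barycentric subdivision of the polytope's boundary, hence homeomorphic to $S^{\dim X - \dim Y - 1}$. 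Everything else is a routine dictionary between Theorem~\ref{mainresult}, reduced cohomology of spheres, and the reverse-inclusion ordering on $\LLL$; I would present it compactly as a corollary-style deduction rather than a from-scratch argument.
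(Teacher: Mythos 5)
Your proposal is correct and follows exactly the paper's argument: apply Theorem~\ref{mainresult} and use the identification of $\Delta(X,Y)$ with a sphere of dimension $\dim(X)-\dim(Y)-1$ established in the preceding paragraph, so that the reduced cohomology is $\Bbbk$ precisely in degree $\dim(X)-\dim(Y)-1$. The paper's proof is a two-line version of this; your extra care with the diagonal, incomparable, and reverse-inclusion-ordering points is consistent with it and adds nothing that conflicts.
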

\begin{proof}
    We apply Theorem~\ref{mainresult}. Since $\Delta(X,Y)$ is a sphere of
    dimension $\dim(X)-\dim(Y)-1$, it follows that $\til
    H^{n-1}(\Delta(X,Y),\Bbbk)$ is $0$ unless $\dim(X)-\dim(Y)=n$,
    in which case it is $\Bbbk$.
\end{proof}

It follows that the quiver of $\Bbbk\FFF$ coincides with the Hasse diagram
of $\LLL$ ordered by reverse inclusion.
\begin{Cor}[{Saliola~\cite[Corollary~8.4]{Saliolahyperplane}}]
    \label{hyperplanequiver}
The quiver of $\Bbbk\FFF$ has vertex set $\LLL$.
The number of arrows from $X$ to $Y$ is zero unless
$Y \subsetneq X$ and $\dim(X)-\dim(Y)=1$, in which case
there is exactly one arrow.
\end{Cor}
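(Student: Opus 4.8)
The plan is to deduce the statement from Theorem~\ref{quiver} applied to $B = \FFF$, feeding in the topological description of the order complexes $\Delta(X,Y)$ obtained just above. The first assertion is immediate: Theorem~\ref{quiver} puts the vertex set of the quiver in bijection with $\Lambda(\FFF)$, and we have identified $\Lambda(\FFF)$ with $\LLL$.

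For the arrow count I would argue by cases on the pair $X, Y \in \LLL$. Recall that under $\Lambda(\FFF) \cong \LLL$ the $\R$-order corresponds to reverse inclusion of subspaces, so the hypothesis ``$X < Y$'' in Theorem~\ref{quiver} reads ``$Y \subsetneq X$'' here; when $X \subseteq Y$ or $X, Y$ are incomparable there are no arrows, which already accounts for part of the claim. When $Y \subsetneq X$, we showed that $\Delta(X,Y)$ is a sphere of dimension $\dim(X) - \dim(Y) - 1 \geq 0$. If $\dim(X) - \dim(Y) = 1$ this is a $0$-sphere (two points), hence has two connected components, and Theorem~\ref{quiver} gives $2 - 1 = 1$ arrow; if $\dim(X) - \dim(Y) \geq 2$ the sphere is connected, so Theorem~\ref{quiver} gives $1 - 1 = 0$ arrows. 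These cases are exhaustive and yield exactly the stated description. Equivalently, one can observe that the number of arrows from $X$ to $Y$ equals $\dim_\Bbbk \Ext^1_{\Bbbk\FFF}(\Bbbk_X, \Bbbk_Y)$ and invoke Proposition~\ref{ext-spaces} with $n = 1$ directly.

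Since every ingredient has already been established, there is no genuine obstacle; the only thing demanding care is the bookkeeping of conventions---that the isomorphism $\Lambda(\FFF) \to \LLL$ reverses inclusion, and that the quiver convention of Theorem~\ref{quiver} records $\dim \Ext^1(\Bbbk_X,\Bbbk_Y)$ (so an arrow runs from the ``smaller'' support to the ``larger'' one)---which is precisely what turns ``$X < Y$'' into ``$Y \subsetneq X$'' and the cohomological degree $n$ into the dimension drop $\dim(X)-\dim(Y)=n$ in the final statement.
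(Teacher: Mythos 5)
Your proposal is correct and matches the paper's argument: the paper obtains the corollary as an immediate consequence of Proposition~\ref{ext-spaces} (equivalently, of the sphere description of $\Delta(X,Y)$ fed into Theorem~\ref{quiver}), which is exactly what you do, including the correct handling of the order reversal under $\Lambda(\FFF)\cong\LLL$.
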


In~\cite{Saliolahyperplane} a set of quiver relations for $\Bbbk\FFF$ was
described: for each interval of length two in $\LLL$ take the sum of all paths
of length two in the interval. It was also shown that $\Bbbk\FFF$ is a Koszul
algebra and that its Koszul dual algebra is isomorphic to the incidence algebra
of the intersection lattice $\LLL$.

\subsection{Complex hyperplane face monoids}
\label{ss:complexhyperplanemonoids}

There is a left regular band of faces that one can associated with a complex
arrangement~\cite{complexstrat} and it turns out that the situation for complex
hyperplane arrangements is similar to that of real hyperplane arrangements.
Things are slightly more complicated in this setting because the complexes
$\Delta(X, Y)$ do not arise from complex hyperplane face monoids. However,
a careful analysis of the PL structure of the cell complex associated with the
arrangement reveals that $\Delta(X,Y)$ is a sphere of dimension
$\dim_{\mathbb R} X-\dim _{\mathbb R}Y-1$, from which we obtain the following
analogues of the results from the real case,
\textit{cf.} \S~\ref{ss:hyperplanemonoids}.

\begin{Prop}
For $X, Y \in \LLL$ and $n \geq 0$,
\[
\Ext^n_{\Bbbk\FFF}(\Bbbk_X, \Bbbk_Y) \cong
\begin{cases}
\Bbbk, & \text{if } Y \subseteq X \text{ and } \dim_{\mathbb R}(X) - \dim_{\mathbb R}(Y) = n, \\
0, & \text{otherwise}.
\end{cases}
\]
\end{Prop}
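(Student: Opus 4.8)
The plan is to mimic the argument of the real case from \S\ref{ss:hyperplanemonoids} as closely as possible, isolating the single geometric input that differs. The final statement follows immediately from Theorem~\ref{mainresult} once we know that $\Delta(X,Y)$ is a sphere of dimension $\dim_{\mathbb R}(X)-\dim_{\mathbb R}(Y)-1$ when $Y\subseteq X$ (and is empty or contractible otherwise, so that its reduced cohomology vanishes): indeed, $\til H^{n-1}$ of a $(\dim_{\mathbb R}(X)-\dim_{\mathbb R}(Y)-1)$-sphere is $\Bbbk$ exactly when $n=\dim_{\mathbb R}(X)-\dim_{\mathbb R}(Y)$ and $0$ otherwise, and when $Y\not\subseteq X$ we have $X\not\leq Y$ in the support lattice so Theorem~\ref{mainresult} gives $0$ for $n\geq 1$ and for $X\neq Y$ at $n=0$. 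So the proof is one sentence plus the geometric lemma, exactly paralleling Proposition~\ref{ext-spaces}.

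The real obstacle, and the part deserving actual work, is establishing that $\Delta(X,Y)$ is a sphere of the asserted dimension. In the real case one had a clean dictionary: the $\R$-order on $\FFF$ is order-reversing-isomorphic to the face poset of the regular cell decomposition $\varSigma$ cut out on a sphere, whose dual is the boundary of a polytope, and the order complex of a polytope boundary is its barycentric subdivision, hence a sphere. For a complex arrangement this breaks down because, as the text warns, the complexes $\Delta(X,Y)$ do \emph{not} themselves arise as order complexes of a complex hyperplane face monoid. The strategy is therefore to work directly with the PL (piecewise-linear) structure of the cell complex $\varSigma$ associated with the complex arrangement --- the regular cell decomposition of a sphere in the ambient real vector space $V_{\mathbb R}$ underlying $\mathbb C^d$, obtained from the real arrangement generated by the complexified hyperplanes together with their $i$-multiples (equivalently, the relevant stratification from \cite{complexstrat}). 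One must show that the subposet $B[X,Y]\setminus\{y\}$, whose order complex is $\Delta(X,Y)$, is the face poset (or an appropriate sub-decomposition) of a PL sphere of the correct dimension, and that passing to the order complex preserves the PL-sphere property.

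First I would recall the precise description of the complex hyperplane face monoid $\FFF$ from \cite{complexstrat} and identify, for each pair $X\leq Y$ in $\LLL$, the ``localized'' arrangement: intersect $X$ with the complex hyperplanes of $\AAA$ that contain $Y$ but not $X$; this should, by the same restriction--contraction reasoning used for $B[X,Y]$ in \S\ref{s:mainresult}, reduce the general case to the case $X=\wh1$, $Y=\wh0$, i.e.\ to showing $\Delta(\wh0,\wh1)$ is a $(2d-1)$-sphere for a complex arrangement cutting down to the origin. Next, I would realize $\FFF\setminus\{y\}$ as (the face poset of) the induced cell decomposition of the unit sphere $S^{2d-1}\subseteq V_{\mathbb R}$ by the real hyperplane arrangement underlying the complex one, then invoke the standard fact that this decomposition is PL and regular (a sphere arrangement is a shellable regular CW sphere), and finally use that the order complex of a regular CW sphere is its barycentric subdivision, hence PL-homeomorphic to the sphere. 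The delicate point throughout is the PL bookkeeping: verifying that the cell decomposition attached to the \emph{complex} stratification --- which is coarser than the full real arrangement it sits inside --- is still a regular CW decomposition of the sphere with the right dimension, so that the barycentric-subdivision argument applies. I expect that step, the PL analysis matching $B[X,Y]\setminus\{y\}$ to a cell structure on $S^{2\dim_{\mathbb C}X-1}$, to be the main obstacle; the rest is a verbatim transcription of the real proof and the invocation of Theorem~\ref{mainresult}.
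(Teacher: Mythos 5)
Your overall architecture is the same as the paper's: everything reduces, via Theorem~\ref{mainresult}, to the claim that $\Delta(X,Y)$ is a sphere of dimension $\dim_{\mathbb R}(X)-\dim_{\mathbb R}(Y)-1$ when $Y\subseteq X$, and the paper likewise obtains that claim from ``a careful analysis of the PL structure of the cell complex associated with the arrangement,'' whose details are deferred to the full version rather than reproduced in this extended abstract. Your cohomological endgame is a correct transcription of the argument for Proposition~\ref{ext-spaces}, so at the level of detail the paper itself supplies you have reconstructed the intended proof.

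There is, however, one concrete step in your plan for the geometric lemma that would fail, and you half-notice it yourself. You propose to reduce the general pair to the case $(\wh 0,\wh 1)$ ``by the same restriction--contraction reasoning'' as in the real case. In the real case this works because restriction--contraction of a real arrangement is again a real arrangement, so $\Delta(X,Y)$ for $\FFF$ is $\Delta(\wh 0,\wh 1)$ for the face monoid of a smaller real arrangement, and the polytope-duality argument applies verbatim. The sentence you quote --- that ``the complexes $\Delta(X,Y)$ do not arise from complex hyperplane face monoids'' --- is precisely the statement that this reduction is unavailable here: the contraction of a complex face monoid in the stratification of \cite{complexstrat} is not itself the face monoid of a complex arrangement, so there is no smaller complex arrangement whose $\Delta(\wh 0,\wh 1)$ realizes your $\Delta(X,Y)$, and one cannot first dispose of the top-to-bottom case and then quote it for general $X\leq Y$. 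That is exactly why the general subposet $B[X,Y]\setminus\{y\}$ must be attacked directly, by exhibiting it as the face poset of a regular cell decomposition of a sphere of the right dimension and then taking barycentric subdivisions; this direct PL analysis is the ``main obstacle'' you correctly identify but do not carry out, and it is the entire content the paper is alluding to. In short, the gap is not in the passage from spheres to $\Ext$ groups but in the sphere claim itself, where the one reduction you commit to is the one move the complex setting forbids.
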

%\begin{proof}
%    We apply Theorem~\ref{mainresult}. Since $\Delta(X,Y)$ is a sphere of
%    dimension $\dim_{\mathbb R}(X)-\dim_{\mathbb R}(Y)-1$, it follows that $\til
%    H^{n-1}(\Delta(X,Y),\Bbbk)$ is $0$ except for when $\dim_{\mathbb R}(X)-\dim_{\mathbb R}(Y)=n$,
%    in which case it is $\Bbbk$.
%\end{proof}

An immediate consequence is that the quiver of $\Bbbk\FFF$ coincides with the Hasse diagram
of the augmented intersection lattice $\LLL$ ordered by reverse inclusion, as was the case for real hyperplane arrangements.

%\begin{Cor}
%The quiver of $\Bbbk\FFF$ has vertex set $\LLL$.
%The number of arrows from $X$ to $Y$ is zero unless
%$Y \subsetneq X$ and $\dim(X)-\dim(Y)=1$, in which case
%there is exactly one arrow.
%\end{Cor}

\subsection{Geometric left regular bands and commutation graphs}

We say that a left regular band $B$ is \emph{geometric} if,
for each $a \in B$, the left
stabilizer $\{b\in B\mid b\geq_\R a\}$ of $a$
%$$\operatorname{Stab}_B(a) = \{b \in B : ba = a\} = \{b\in B\mid b\geq_\R a\},$$
is commutative (and hence a lattice under the order $\leq_\R$ with the meet
given by the product).
The left regular bands associated to hyperplane arrangements, oriented
matroids and $\mathrm{CAT}(0)$ cube complexes are geometric, whence the name. 
Almost all the left regular bands appearing so far in the algebraic
combinatorics literature embed in $\{0,+,-\}^n$ and hence are geometric
(see the discussion in \S~\ref{s:LRBs}).

For a finite geometric left regular band $B$, we will use the following
special case of Rota's cross-cut theorem~\cite{Rota,bjornersurvey} to provide
a simplicial complex homotopy equivalent to the order complex
$\Delta(\wh0, \wh1)$ of $B \setminus \left\{1\right\}$.
This second complex simplifies the computation of the quiver of $\Bbbk B$.

\begin{Thm}[Rota]\label{crosscutstated}
Let $P$ be a finite poset such that any subset of $P$ with a common lower bound
has a meet.  Define a simplicial complex $K$ with vertices the maximal elements
of $P$ and with simplices those subsets with a common lower bound.  Then $K$ is
homotopy equivalent to the order complex $\Delta(P)$.
\end{Thm}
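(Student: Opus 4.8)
The plan is to derive the statement from the Nerve Lemma (see, e.g., \cite{bjornersurvey}), applied to a cover of the order complex $\Delta(P)$ by contractible subcomplexes indexed by the maximal elements of $P$.

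First I would introduce, for each maximal element $m$ of $P$, the subposet $P_{\leq m} = \{x \in P : x \leq m\}$ and the subcomplex $U_m = \Delta(P_{\leq m})$ of $\Delta(P)$. Since $P_{\leq m}$ has a greatest element, namely $m$, its order complex is a cone with apex $m$ and is therefore contractible. Next I would check that $\{U_m : m \text{ maximal in } P\}$ is indeed a cover of $\Delta(P)$: a simplex of $\Delta(P)$ is a finite chain $x_0 < \cdots < x_r$, and because $P$ is finite its top $x_r$ lies below some maximal element $m$, so the whole chain lies in $P_{\leq m}$ and the simplex lies in $U_m$.

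The key step is to analyze the nonempty finite intersections. For maximal elements $m_1,\dots,m_\ell$, working at the level of (full) subcomplexes one has $U_{m_1}\cap\cdots\cap U_{m_\ell} = \Delta(P_{\leq m_1}\cap\cdots\cap P_{\leq m_\ell})$, and $P_{\leq m_1}\cap\cdots\cap P_{\leq m_\ell}$ is exactly the set $L$ of common lower bounds of $\{m_1,\dots,m_\ell\}$. If $L = \emptyset$ this intersection is empty. If $L \neq \emptyset$, then the hypothesis on $P$ supplies a meet $\bigwedge_i m_i$, which is necessarily the greatest element of $L$; hence $\Delta(L)$ is again a cone, so contractible. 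Thus the cover $\{U_m\}$ is good, and the Nerve Lemma yields a homotopy equivalence between $\Delta(P)$ and the nerve $\ner(\{U_m\})$.

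It then remains to identify this nerve with $K$. Its vertices are the maximal elements $m$ with $U_m \neq \emptyset$, which is all of them since $m \in U_m$; and a finite set $\{m_1,\dots,m_\ell\}$ of maximal elements is a simplex of the nerve exactly when $U_{m_1}\cap\cdots\cap U_{m_\ell} \neq \emptyset$, which by the previous paragraph holds if and only if $\{m_1,\dots,m_\ell\}$ has a common lower bound in $P$. This is precisely the definition of $K$, so $K = \ner(\{U_m\}) \simeq \Delta(P)$. The only point requiring care is that the Nerve Lemma needs the intersections to be genuinely contractible rather than merely acyclic, and this is exactly what the meet hypothesis delivers, by producing a top element (hence a cone) in each relevant subposet; everything else is bookkeeping with chains and the finiteness of $P$.
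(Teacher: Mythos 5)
Your proof is correct, and since the paper states this result without proof (citing Rota and Bj\"orner's survey), there is nothing to diverge from: your nerve-lemma argument --- covering $\Delta(P)$ by the cones $\Delta(P_{\leq m})$ over maximal elements and using the meet hypothesis to make every nonempty intersection a cone over the corresponding meet --- is exactly the standard proof given in the cited reference. The two points that actually need care, namely that the intersections of these full subcomplexes are the order complexes of the sets of common lower bounds and that the meet is the greatest element of that set, are both handled correctly.
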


We also need the well-known notion of the \emph{clique complex}
or \emph{flag complex} $\Cliq(G)$ of a simple graph $G$: it is
the simplicial complex whose vertices are the vertices of $G$ and whose
simplices are the subsets of vertices that induce a complete subgraph. Notice
that $G$ is the $1$-skeleton of $\Cliq(G)$ and that $\Cliq(G)$ is obtained by
`filling in' the $1$-skeleton of every $q$-simplex found in $G$.

Let $\mathscr M(B)$ be the set of maximal elements of $B\setminus \{1\}$ and
let $\Gamma(\mathscr M(B))$ be the \emph{commutation graph} of $\mathscr M(B)$:
that is, the graph whose vertex set is $\mathscr M(B)$ and whose edges are
pairs $(a,b)$ such that $ab=ba$. For oriented matroids, the commutation graph is known as the \emph{cocircuit graph} in the literature~\cite{Orientedmatroids}.

\begin{Thm}\label{commutationgraph}
Let $B$ be a finite geometric left regular band. Then $\Delta(\wh 0,\wh 1)$ is
homotopy equivalent to the clique complex $\Cliq(\Gamma(\mathscr M(B)))$ of the
commutation graph $\Gamma(\mathscr M(B))$ of the maximal elements of
$B\setminus \{1\}$.
\end{Thm}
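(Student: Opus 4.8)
The plan is to apply Rota's cross-cut theorem (Theorem~\ref{crosscutstated}) to the poset $P = B \setminus \{1\}$ equipped with Green's $\R$-order, and then identify the resulting complex $K$ with the clique complex $\Cliq(\Gamma(\mathscr M(B)))$. First I would verify that $P = B \setminus \{1\}$ satisfies the hypothesis of Rota's theorem: any subset of $P$ with a common lower bound (in the $\R$-order) has a meet. This is where the \emph{geometric} hypothesis enters. If $a_1, \ldots, a_k \in P$ have a common lower bound $a$, then each $a_i \geq_\R a$, so all the $a_i$ lie in the left stabilizer $\{b \in B \mid b \geq_\R a\}$, which by hypothesis is commutative and hence a lattice under $\leq_\R$ with meet given by the product. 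Thus $a_1 \cdots a_k$ is the meet of the $a_i$ in that stabilizer; I would then check that this product is also the meet in all of $P$ (a common lower bound of the $a_i$ in $P$ is $\geq_\R a$ only if we choose $a$ to be, say, $a_1 \cdots a_k$ itself or any lower bound, so one should argue directly that any $c \in P$ with $c \leq_\R a_i$ for all $i$ satisfies $c \leq_\R a_1\cdots a_k$, using $a_1 \cdots a_k \cdot c = c$ when $c \leq_\R$ each factor — this follows by induction from the description $a \leq_\R b \iff ba = a$). Note also $a_1 \cdots a_k \neq 1$ since $a_1 \neq 1$ and $a_1 \cdot (a_1\cdots a_k) $ lies below $a_1$, so the meet stays in $P$.

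Next I would identify the two simplicial complexes. By Rota's theorem, $\Delta(P) = \Delta(\wh 0, \wh 1)$ is homotopy equivalent to the complex $K$ whose vertices are the maximal elements of $P$ — which is exactly $\mathscr M(B)$ — and whose simplices are the subsets of $\mathscr M(B)$ with a common lower bound in $P$. So it remains to show: a subset $S \subseteq \mathscr M(B)$ has a common lower bound in the $\R$-order if and only if $S$ induces a complete subgraph of the commutation graph $\Gamma(\mathscr M(B))$, i.e. the elements of $S$ pairwise commute. The forward direction: if $S$ has a common lower bound then, as above, all elements of $S$ lie in a common commutative stabilizer, so they pairwise commute — so $S$ is a clique. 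The reverse direction is the crux: given pairwise-commuting $a_1, \ldots, a_k \in \mathscr M(B)$, I must produce a common lower bound. The natural candidate is the product $a_1 a_2 \cdots a_k$; since the $a_i$ pairwise commute, this product is independent of the order, is idempotent, and satisfies $a_i \cdot (a_1 \cdots a_k) = a_1 \cdots a_k$ (move $a_i$ to the front using commutativity and absorb it), hence $a_1\cdots a_k \leq_\R a_i$ for every $i$. One must also confirm $a_1 \cdots a_k \neq 1$, which holds because $a_1 \neq 1$ and $a_1 \cdots a_k \leq_\R a_1 < \wh 1$.

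The main obstacle I anticipate is the subtlety in the reverse direction — specifically, ensuring that pairwise commutativity of a set of maximal elements actually forces the product to be order-independent and to behave well. Pairwise commutativity gives $a_i a_j = a_j a_i$ for each pair, but in a general band this does not immediately imply that an arbitrary product of the $a_i$ is invariant under all permutations; one needs that the submonoid generated by a pairwise-commuting set in a left regular band is commutative. This should follow from the left regular band identities: in $B(\Gamma)$-type reasoning, pairwise-commuting generators generate a free semilattice, so all products of a fixed support collapse to a single element. I would invoke (or quickly reprove) the fact that in a left regular band, a set of pairwise-commuting idempotents generates a commutative submonoid (indeed a semilattice), perhaps citing the structure of free partially commutative left regular bands or arguing directly via the support map $\sigma$ together with the $\R$-order. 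Once that is in hand, the product $a_1 \cdots a_k$ is well-defined and the argument closes. A secondary, minor point to handle carefully is that "common lower bound in $P$" versus "common lower bound in $B$" coincide here, since the only element of $B \setminus P$ is the identity $1$, which is $\geq_\R$ everything and so is never a lower bound of any element of $P$; thus lower bounds in $P$ and in $B$ agree, and Rota's complex is unaffected.
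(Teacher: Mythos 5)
Your proposal is correct and follows exactly the route the paper intends: apply Rota's cross-cut theorem to $B\setminus\{1\}$ under the $\R$-order (the geometric hypothesis supplying the meets via commutative stabilizers) and identify the cross-cut complex with $\Cliq(\Gamma(\mathscr M(B)))$. The ``main obstacle'' you flag is not actually one: order-independence of the product is never needed, since $a_i(a_1\cdots a_k)=a_1\cdots a_k$ follows directly from pairwise commutativity and idempotency, as your own parenthetical computation already shows.
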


Theorem~\ref{commutationgraph} can be used to give another proof that if a left
regular band $B$ is right hereditary, then $\Bbbk B$ is hereditary (\textit{cf.}
Section~\ref{ss:righthereditary}).
It also leads to an easy computation of the quiver.

\begin{Thm}\label{treethm}
    Let $B$ be a finite, right hereditary, left regular band and $\Bbbk$ a field.
    Then $\Bbbk B$ is hereditary.
    Its quiver has vertex set $\Lambda(B)$.
    The number of arrows from $X$ to $Y$ is zero if $X \not< Y$;
    otherwise, it is one less than the number of children of $y$ with support
    greater than or equal to $X$ (for any $y \in B$ with $\sigma(y) = Y$).
\end{Thm}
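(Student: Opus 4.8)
The plan is to deduce everything from Theorems~\ref{quiver} and~\ref{commutationgraph} together with the tree hypothesis on the $\R$-order. First I would dispose of heredity: since $B$ is right hereditary, the Hasse diagram of the $\R$-order on $B$ is a tree, and the same is true for every subposet $B[X,Y]$, because $B[X,Y]$ is obtained from $B$ by the restriction--contraction operation and this operation preserves the property of the $\R$-order being a tree (one must check that intersecting with a principal down-set $\{z \le_\R y\}$ and an up-set $\{X \le \sigma(z)\}$ cannot create cycles in the Hasse diagram; a tree's induced subposets are forests, and the relevant subposet is in fact connected with a maximum element, hence a tree). A finite poset whose Hasse diagram is a tree has contractible order complex when it has a maximum, and more generally has vanishing reduced cohomology in all positive degrees; applying Theorem~\ref{mainresult} gives $\Ext^n_{\Bbbk B}(\Bbbk_X,\Bbbk_Y)=0$ for all $n\ge 2$, so $\Bbbk B$ has global dimension at most $1$ and is hereditary.

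Next, the quiver. By Theorem~\ref{quiver} the vertex set is $\Lambda(B)$ and the number of arrows from $X$ to $Y$ is zero unless $X<Y$, in which case it equals one less than the number of connected components of $\Delta(X,Y)$, the order complex of $B[X,Y]\setminus\{y\}$. So the whole problem reduces to counting connected components of $B[X,Y]\setminus\{y\}$. Here I would invoke Theorem~\ref{commutationgraph}, applied not to $B$ itself but to the left regular band $B[X,Y]$, whose maximum element (with respect to $\leq_\R$) is $y$: the theorem tells us $\Delta(\wh 0,\wh 1)$ of $B[X,Y]$ — which is exactly $\Delta(X,Y)$ — is homotopy equivalent to the clique complex of the commutation graph on the maximal elements of $B[X,Y]\setminus\{y\}$. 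These maximal elements are precisely the children of $y$ (the elements covered by $y$ in the $\R$-order) that lie in $B[X,Y]$, i.e.\ those children $z$ of $y$ with $\sigma(z)\ge X$. Connected components are a homotopy invariant, so the number of connected components of $\Delta(X,Y)$ equals the number of connected components of the clique complex, which equals the number of connected components of the commutation graph itself.

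Finally I would show that, under the tree hypothesis, the commutation graph on the children of $y$ has no edges, so its number of connected components is simply the number of such children. The key point is: if $z_1,z_2$ are two distinct children of $y$ in a left regular band whose $\R$-order is a tree, then $z_1$ and $z_2$ cannot commute. Indeed, if $z_1z_2=z_2z_1=:w$, then $w\le_\R z_1$ and $w\le_\R z_2$, so in the Hasse diagram there are two distinct directed paths from (the vertex) $y$ down to $w$ — one through $z_1$, one through $z_2$ — contradicting the tree property (a tree has a unique path between any two vertices). Hence the commutation graph is edgeless, its components are its vertices, and the count of arrows from $X$ to $Y$ is one less than the number of children of $y$ with support $\ge X$, for any $y$ with $\sigma(y)=Y$ (independence of the choice of $y$ being guaranteed by the remark following the definition of $\Delta(X,Y)$). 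The main obstacle I anticipate is the bookkeeping in identifying "maximal elements of $B[X,Y]\setminus\{y\}$" with "children of $y$ of support $\ge X$" and in verifying that restriction--contraction preserves the tree property; the commutation argument itself is short once the right picture is in place.
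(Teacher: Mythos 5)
Your proposal is essentially the paper's argument: Theorem~\ref{treethm} is obtained by feeding Theorem~\ref{commutationgraph}, applied to the left regular bands $B[X,Y]$, into Theorem~\ref{quiver}, and your observation that two distinct children $z_1,z_2$ of $y$ cannot commute when the $\R$-order is a tree (saturated chains from $z_1z_2=z_2z_1$ up to $y$ through $z_1$ and through $z_2$ would give two distinct paths in the Hasse diagram) is exactly the point that makes the commutation graph edgeless, hence the clique complex a discrete set whose components are the children of $y$ of support at least $X$.

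Two small steps need patching. First, Theorem~\ref{commutationgraph} carries the hypothesis that the left regular band is \emph{geometric}, and you apply it to $B[X,Y]$ without checking this. The verification is short: in any left regular band, $\R$-comparable elements commute (if $ba=a$ then $ab=(ba)b=bab=ba=a$), and when the Hasse diagram of the $\R$-order is a tree with maximum, each element has a unique saturated chain to the top, so every left stabilizer $\{b : b\geq_\R a\}$ is a chain and therefore commutative; since the $\R$-order on $B[X,Y]$ is again a tree, $B[X,Y]$ is geometric. Second, your justification that $B[X,Y]$ has tree Hasse diagram leans on the claim that induced subposets of a tree-ordered poset are forests, which is false in general because deleting elements can create new covering relations. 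What saves the argument is that $B[X,Y]$ is order-convex in $B$ -- it is an up-set inside the principal down-set of $y$, since $\sigma$ is order-preserving -- so its covering relations are covering relations of $B$ and its Hasse diagram is literally a connected subgraph of the tree, with maximum $y$. With these two points supplied, your proof is complete and coincides with the paper's.
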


%% TODO: sacrifice decide on whether we should keep this paragraph
%Theorem~\ref{treethm} affords a one-line proof of the description of the quiver
%of $\Bbbk F_n$ from Corollary~\ref{freelrb}: if $X\supsetneq Y$ and $y$ is
%a word with support $Y$, then the children of $y$ with support greater than or
%equal to $X$ are the words $yx$ with $x\in X\setminus Y$; thus, there are $|X
%\setminus Y| - 1$ arrows from $X$ to $Y$.

Theorem~\ref{treethm} covers the algebras of nearly all the left regular bands
considered by K.~Brown in~\cite{Brown1} (except the hyperplane monoids) and the
interval greedoid left regular bands of Bj\"orner~\cite{bjorner2}.

\subsection{Global dimension of a left regular band algebra}
\label{ss:gldim}

We now turn to applications of the higher degree cohomology spaces,
especially the first degree for which the higher degree cohomology vanishes.
This ``vanishing degree'' relates the \emph{Leray number} of the simplicial
complex with the \emph{global dimension} of the monoid algebra.

The \emph{global dimension} $\mathrm{gl.}\dim A$ of a finite dimensional
algebra $A$ over a field $\Bbbk$ is the smallest integer $n$ such
that $\Ext_A^{m}(S_1,S_2)=0$ for all $m>n$ and all simple $A$-modules
$S_1$ and $S_2$ (see~\cite{assem}).
An algebra $A$ has global dimension zero if and only if it is semisimple;
it has $\mathrm{gl.}\dim A\leq 1$ if and only if it is hereditary.

\begin{Thm}\label{globaldimension}
Let $B$ be a finite left regular band and $\Bbbk$ a field.  Then
\begin{align*}
    \mathrm{gl.}\dim \Bbbk B = \min \{n : \til H^n(\Delta(X,Y),\Bbbk)=0
                            \text{~for all~} X, Y\in\Lambda(B)
                            \text{~with~} X < Y\}.
\end{align*}
In particular, one has
$
    \mathrm{gl.}\dim \Bbbk B\leq m
$
where $m$ is the length of
the longest chain in $\Lambda(B)$.
\end{Thm}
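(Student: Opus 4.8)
The plan is to deduce Theorem~\ref{globaldimension} directly from Theorem~\ref{mainresult}. By definition, $\mathrm{gl.}\dim\Bbbk B$ is the smallest $n$ with $\Ext^m_{\Bbbk B}(\Bbbk_X,\Bbbk_Y)=0$ for all $m>n$ and all $X,Y\in\Lambda(B)$; here we use that the $\Bbbk_X$ exhaust the simple modules (noted in \S\ref{s:mainresult}). Theorem~\ref{mainresult} translates the vanishing of these $\Ext$ groups into the vanishing of reduced cohomology: for $m\geq 1$, $\Ext^m_{\Bbbk B}(\Bbbk_X,\Bbbk_Y)\cong\til H^{m-1}(\Delta(X,Y),\Bbbk)$ when $X<Y$, and this group is $0$ whenever $X\not<Y$ and $m\geq 1$ (the only nonzero higher $\Ext$ come from pairs $X<Y$). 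So the condition ``$\Ext^m=0$ for all $m>n$'' is equivalent to ``$\til H^{m-1}(\Delta(X,Y),\Bbbk)=0$ for all $m>n$ and all $X<Y$'', i.e. ``$\til H^{k}(\Delta(X,Y),\Bbbk)=0$ for all $k\geq n$ and all $X<Y$''. Taking the minimal such $n$ gives exactly the displayed formula, provided I also check the degenerate case where the set on the right is empty (no $X<Y$ at all, i.e. $\Lambda(B)$ is a point, equivalently $B$ has trivial support lattice); then $\Bbbk B$ is semisimple and both sides are $0$.

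The one subtlety worth spelling out is the passage from ``$\til H^m=0$ for all $m\ge n$'' to ``$\til H^{n}=0$''. A priori the right-hand side of the theorem only demands that the single cohomology group in degree $n$ vanish for every pair $X<Y$; I must argue this already forces vanishing in all degrees $\ge n$. This is where finiteness enters: the complexes $\Delta(X,Y)$ are finite simplicial complexes, hence their reduced cohomology vanishes above some degree, so for any fixed pair the set of ``bad'' degrees is finite; but that alone does not give an interval of good degrees. The clean way is to observe that $\Delta(X,Y)$ is the order complex of the poset $B[X,Y]\setminus\{y\}$, and that the dimension of this order complex is bounded by the length of the longest chain in $B[X,Y]\setminus\{y\}$, which in turn is controlled by the length of the chain from $X$ to $Y$ in $\Lambda(B)$ via the support map $\sigma$ (an order-preserving, indeed strictly support-increasing, map on chains in $B[X,Y]$). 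Thus if $\ell$ is the length of the longest chain $X=X_0<X_1<\cdots<X_\ell=Y$ realizable in $\Lambda(B)$, every relevant $\Delta(X,Y)$ has dimension $<\ell$, so $\til H^{k}(\Delta(X,Y),\Bbbk)=0$ for all $k\ge\ell$ automatically — giving the ``in particular'' clause $\mathrm{gl.}\dim\Bbbk B\le m$ for free — and more importantly it lets me rephrase the min: the right-hand side is the least $n$ such that degree-$n$ cohomology vanishes for all pairs, and since everything above $\ell$ vanishes, such an $n$ exists and the two formulations (``vanishes in degree $n$'' vs. ``vanishes in all degrees $\ge n$'') coincide once we take the minimum. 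Actually the cleanest phrasing avoids this issue entirely: define $N=\min\{n:\til H^n(\Delta(X,Y),\Bbbk)=0\ \forall X<Y\}$ as in the statement and separately $N'=\mathrm{gl.}\dim\Bbbk B$; show $N'\le N$ is false in general is not needed — rather show $N'=$ (least $n$ with $\til H^{\ge n}$ all zero), then check that least $n$ equals $N$ using the dimension bound to see that the first degree in which all complexes are acyclic is also a degree beyond which all are acyclic.

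So the key steps, in order, are: (1) invoke Theorem~\ref{mainresult} to rewrite each $\Ext^m_{\Bbbk B}(\Bbbk_X,\Bbbk_Y)$ with $m\ge 1$ as reduced cohomology of $\Delta(X,Y)$ (and note it vanishes unless $X<Y$), so that $\mathrm{gl.}\dim\Bbbk B$ is the least $n$ with $\til H^{k}(\Delta(X,Y),\Bbbk)=0$ for all $k\ge n$ and all $X<Y$; (2) bound $\dim\Delta(X,Y)$ by $\ell-1$ where $\ell$ is the length of the longest chain in $\Lambda(B)$ between comparable elements, using that $\sigma$ sends a chain in $B[X,Y]\setminus\{y\}$ of length $j$ to a chain in $\Lambda(B)$ between $X$ and $Y$ of length $\ge j+1$; (3) conclude that ``$\til H^n$ vanishes for all pairs'' implies ``$\til H^{k}$ vanishes for all pairs and all $k\ge n$'' once $n$ is minimal (there is no gap: the minimal $n$ at which all complexes are acyclic is at most $\ell$, and all degrees beyond $\ell-1$ are acyclic for dimension reasons, so the set of ``all-acyclic'' degrees is an up-set), yielding the stated equality; (4) handle the edge case $\Lambda(B)=\{\wh0\}$, where $\Bbbk B$ is semisimple and both sides are $0$; and (5) read off $\mathrm{gl.}\dim\Bbbk B\le m$. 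I expect step (3) — pinning down that the ``first acyclic degree'' is genuinely the threshold and not merely an isolated acyclic degree below a non-acyclic one — to be the main point requiring care, and the dimension bound of step (2) is precisely the tool that resolves it.
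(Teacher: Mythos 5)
Your reduction to Theorem~\ref{mainresult} is the right starting point, and you correctly isolate the crux: the right-hand side of the theorem is $N=\min\{n:\til H^n(\Delta(X,Y),\Bbbk)=0\ \text{for all}\ X<Y\}$, whereas what Theorem~\ref{mainresult} immediately gives is that $\mathrm{gl.}\dim\Bbbk B$ is the least $n$ with $\til H^k(\Delta(X,Y),\Bbbk)=0$ for \emph{all} $k\geq n$ and all $X<Y$. The inequality $N\leq\mathrm{gl.}\dim\Bbbk B$ is then immediate. But your step (3) does not establish the reverse inequality. Knowing that every $\Delta(X,Y)$ has dimension at most $m-1$ only tells you that the set of ``all-acyclic'' degrees contains the tail $[m,\infty)$; it does not make that set an up-set, and nothing in the dimension bound rules out a gap, i.e.\ an $n$ with $\til H^{n}(\Delta(X,Y),\Bbbk)=0$ for every pair while $\til H^{k}(\Delta(X,Y),\Bbbk)\neq 0$ for some pair and some $k>n$. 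A single $\Delta(X,Y)$ certainly can have gaps in its own cohomology --- for a hyperplane face monoid, $\Delta(X,Y)$ is a sphere of dimension $\dim X-\dim Y-1$, acyclic in every degree below the top one --- so the absence of gaps in the aggregate over all pairs is a genuine theorem, not a bookkeeping consequence of finiteness.

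The missing ingredient is homological, not dimensional. Let $g=\mathrm{gl.}\dim\Bbbk B$, pick a simple module $\Bbbk_X$ of projective dimension $g$, and take a minimal projective resolution $P_\bullet\to\Bbbk_X$. Minimality means each differential lands in the radical of its target, so $\Ext^i_{\Bbbk B}(\Bbbk_X,S)\cong\Hom_{\Bbbk B}(P_i,S)$ for every simple $S$; moreover $P_i\neq 0$ for all $0\leq i\leq g$ (if some $P_i$ vanished, the $i$-th syzygy would vanish and the projective dimension would be less than $g$). Hence for every $1\leq i\leq g$ there is a simple $\Bbbk_{Y_i}$ with $\Ext^i_{\Bbbk B}(\Bbbk_X,\Bbbk_{Y_i})\neq 0$, which by Theorem~\ref{mainresult} forces $X<Y_i$ and $\til H^{i-1}(\Delta(X,Y_i),\Bbbk)\neq 0$. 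Thus no $n<g$ belongs to the set being minimized, so $N\geq g$, closing the argument. Your steps (1), (2), (4) and the ``in particular'' clause are fine as written; only the justification of step (3) needs to be replaced by this minimal-resolution argument.
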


%The final statement is a special case of Nico's results~\cite{Nico1,Nico2} on the global
%dimension of the algebra of a von Neumann regular semigroup; it can also be
%deduced from the theory of quasi-hereditary algebras.

As an immediate corollary, we obtain a characterization of those algebras $\Bbbk B$
that are hereditary in terms of the order complex of $B$.
\begin{Cor}
    The monoid algebra $\Bbbk B$ of a left regular band is hereditary if and
    only if each connected component of each simplicial complex $\Delta(X,Y)$,
    for $X < Y \in \Lambda(B)$, is acyclic.
\end{Cor}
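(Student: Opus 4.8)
The plan is to deduce the corollary from Theorem~\ref{globaldimension} (or, equivalently, directly from Theorem~\ref{mainresult}) together with a routine fact about the cohomology of a disjoint union of simplicial complexes. First I would recall, as noted in \S~\ref{ss:gldim}, that $\Bbbk B$ is hereditary precisely when $\mathrm{gl.}\dim\Bbbk B\le 1$; by the definition of global dimension this means $\Ext^{m}_{\Bbbk B}(S_1,S_2)=0$ for all simple $\Bbbk B$-modules $S_1,S_2$ and all $m\ge 2$. Since the simple modules are exactly the $\Bbbk_X$ with $X\in\Lambda(B)$, Theorem~\ref{mainresult} shows that for $m\ge 2$ the group $\Ext^{m}_{\Bbbk B}(\Bbbk_X,\Bbbk_Y)$ vanishes automatically unless $X<Y$, in which case it equals $\til H^{m-1}(\Delta(X,Y),\Bbbk)$. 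Hence $\Bbbk B$ is hereditary if and only if
\[
    \til H^{n}(\Delta(X,Y),\Bbbk)=0\qquad\text{for all }n\ge 1\text{ and all }X<Y\text{ in }\Lambda(B);
\]
equivalently, the minimum appearing in Theorem~\ref{globaldimension} is at most $1$.

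Next I would pass from $\Delta(X,Y)$ to its connected components. For any finite simplicial complex $\Delta$ with connected components $\Delta_1,\dots,\Delta_c$ there is a natural isomorphism $\til H^{n}(\Delta,\Bbbk)\cong\bigoplus_{i=1}^{c}\til H^{n}(\Delta_i,\Bbbk)$ for every $n\ge 1$ (while $\til H^{0}(\Delta,\Bbbk)\cong\Bbbk^{\,c-1}$). Since each $\Delta_i$ is connected we have $\til H^{0}(\Delta_i,\Bbbk)=0$, so $\Delta_i$ is acyclic (over $\Bbbk$) if and only if $\til H^{n}(\Delta_i,\Bbbk)=0$ for all $n\ge 1$. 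Consequently, for a fixed pair $X<Y$, every connected component of $\Delta(X,Y)$ is acyclic if and only if $\til H^{n}(\Delta(X,Y),\Bbbk)=0$ for all $n\ge 1$.

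Combining the two steps yields the equivalences: $\Bbbk B$ is hereditary $\iff$ $\til H^{n}(\Delta(X,Y),\Bbbk)=0$ for all $n\ge 1$ and all $X<Y$ $\iff$ every connected component of every complex $\Delta(X,Y)$ with $X<Y$ is acyclic, which is the assertion of the corollary. I expect essentially no obstacle, as the real content is carried by Theorem~\ref{globaldimension}; the only points needing a moment's care are (i) matching ``hereditary'' with $\mathrm{gl.}\dim\le 1$, so that \emph{all} higher $\Ext$-groups --- hence all positive-degree reduced cohomologies of the $\Delta(X,Y)$ --- are controlled, rather than just $\Ext^{2}$, and (ii) the degenerate case $B[X,Y]=\{y\}$, in which $\Delta(X,Y)$ is the empty complex: it has no connected components, so the condition holds vacuously, in agreement with $\Ext^{\ge 1}_{\Bbbk B}(\Bbbk_X,\Bbbk_Y)=0$ in that case.
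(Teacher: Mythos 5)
Your argument is correct and is exactly the route the paper intends: the corollary is stated as an immediate consequence of Theorem~\ref{globaldimension} (equivalently of Theorem~\ref{mainresult}), with the only content being the translation of ``hereditary'' into $\mathrm{gl.}\dim\le 1$ and the standard identification of vanishing positive-degree reduced cohomology with acyclicity of each connected component. Your two cautionary points (controlling all higher $\Ext$-groups rather than just $\Ext^2$, and the empty-complex case) are handled correctly and add nothing beyond what the paper leaves implicit.
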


\subsection{Leray numbers and an improved upper bound on global dimension}

We can improve on the upper bound from Theorem~\ref{globaldimension} using the
Leray number of a simplicial complex.
The \emph{$\Bbbk$-Leray number} of a simplicial complex $K$ with vertex set $V$
is
\begin{align*}
    L_\Bbbk(K) =\min\left\{d : \til H^i(K[W],\Bbbk) = 0
                \text{~for all~} i\geq d
                \text{~and all~} W\subseteq V \right\},
\end{align*}
where $K[W]$ denotes the subcomplex of $K$ consisting of all simplices whose
vertices belong to $W$.

\begin{Thm}\label{Leraybound}
Let $B$ be a finite left regular band and $\Bbbk$ a field.  Then $\mathrm{gl.}\dim
\Bbbk B$ is bounded above by the Leray number $L_\Bbbk(\Delta(B))$ of the order complex
of $B$.
\end{Thm}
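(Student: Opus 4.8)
The plan is to bound $\mathrm{gl.}\dim \Bbbk B$ using Theorem~\ref{globaldimension}, which reduces the problem to showing that $\til H^n(\Delta(X,Y),\Bbbk) = 0$ for all $X < Y$ in $\Lambda(B)$ whenever $n \geq L_\Bbbk(\Delta(B))$. By the very definition of the Leray number $L_\Bbbk(\Delta(B))$, we already know that $\til H^i(\Delta(B)[W],\Bbbk) = 0$ for all $i \geq L_\Bbbk(\Delta(B))$ and every subset $W$ of the vertex set of $\Delta(B)$. So the entire content of the proof is to relate each complex $\Delta(X,Y)$ to an induced subcomplex $\Delta(B)[W]$ for a suitable choice of $W$, or at least to a complex with the same cohomology in the relevant range.

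First I would recall that $\Delta(X,Y)$ is, by definition, the order complex of the poset $B[X,Y] \setminus \{y\}$, where $B[X,Y] = \{z \in B : X \leq \sigma(z) \text{ and } z \leq_\R y\}$ for a chosen $y$ with $\sigma(y) = Y$. Since $\Delta(B)$ is the order complex of all of $B \setminus \{1\}$ (or $B$ itself), and $B[X,Y] \setminus \{y\}$ is a subposet of $B$, its order complex is naturally a subcomplex of $\Delta(B)$. The key point I would need to verify is that it is actually an \emph{induced} subcomplex: that is, if $W = B[X,Y]\setminus\{y\}$ is the vertex set, then $\Delta(B)[W]$ equals $\Delta(X,Y)$. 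This holds because the simplices of $\Delta(B)[W]$ are exactly the chains of $B \setminus \{1\}$ lying entirely in $W$, and these are precisely the chains of the subposet $B[X,Y]\setminus\{y\}$ — a chain in a poset restricted to a subset of elements is still a chain in the induced subposet, and conversely. Thus $\Delta(X,Y) = \Delta(B)[W]$ as simplicial complexes.

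Once that identification is in place, the conclusion is immediate: for any $n \geq L_\Bbbk(\Delta(B))$ and any $X < Y$, set $W = B[X,Y]\setminus\{y\}$; then $\til H^n(\Delta(X,Y),\Bbbk) = \til H^n(\Delta(B)[W],\Bbbk) = 0$ by the defining property of the Leray number. Plugging this into the formula of Theorem~\ref{globaldimension} gives $\mathrm{gl.}\dim \Bbbk B \leq L_\Bbbk(\Delta(B))$, as desired.

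The main obstacle is the bookkeeping around whether $\Delta(B)$ is taken on $B$, on $B \setminus \{1\}$, or — more subtly — whether the vertex set $W$ lands inside the vertex set actually used for $\Delta(B)$; one has to confirm that $y \notin W$ poses no problem and that elements of $B[X,Y]$ other than $1$ really are vertices of $\Delta(B)$ (in particular $1 \notin B[X,Y]$ unless $X = \wh 0$, in which case $1$ is the unique maximum and is excluded along with $y$ only if $Y \neq \wh 1$ — but since $X < Y$ forces $Y \neq \wh 0$, and the element $1$ has support $\wh 1 \geq X$, one checks $1 \in B[X,Y]$ iff $1 \leq_\R y$ iff $y = 1$ iff $Y = \wh 1$, so $1$ appears in $B[X,Y]\setminus\{y\}$ only when $Y=\wh1$ and then it is not removed). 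A clean way to sidestep all of this is to work throughout with $\Delta(B)$ as the order complex of $B$ itself, note that $1$ is a cone point so it contributes nothing to reduced cohomology and does not affect the Leray number, and then the induced-subcomplex identification goes through verbatim. I would also remark that this bound genuinely improves Theorem~\ref{globaldimension}'s bound of $m$, since $L_\Bbbk(\Delta(B))$ is at most the dimension of $\Delta(B)$, which is one less than the longest chain in $B$, and is frequently much smaller.
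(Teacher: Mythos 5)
Your proposal is correct and follows essentially the same route as the paper: since $B[X,Y]\setminus\{y\}$ is an induced subposet of $B$, the complex $\Delta(X,Y)$ is the induced subcomplex $\Delta(B)[W]$ with $W=B[X,Y]\setminus\{y\}$, so the definition of $L_\Bbbk(\Delta(B))$ kills $\til H^n(\Delta(X,Y),\Bbbk)$ for $n\geq L_\Bbbk(\Delta(B))$ and Theorem~\ref{globaldimension} gives the bound. Your side remarks about the identity element are harmless and resolve as you say.
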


Originally, interest in Leray numbers
came about because of connections with Helly-type
theorems~\cite{Wegner,BolandLek,Kalai2}:  the Leray number of a simplicial
complex provides an obstruction for realizing the complex as the nerve of
a collection of compact convex subsets of $\mathbb R^d$.
Leray numbers also play a role in combinatorial commutative algebra:
$L_\Bbbk(K)$ turns out to be the Castelnuovo-Mumford regularity of the
Stanley-Reisner ring of $K$ over $\Bbbk$~\cite{Kalai2};
or equivalently, $L_{\Bbbk}(K)+1$ is the regularity of the face ideal of
$K$~\cite{Kalai1}.

In section~\ref{ss:lerayFPCLRBs}, we give a new interpretation to the Leray
number of the clique complex of a graph in terms of non-commutative algebra.

\subsection{Leray numbers and free partially commutative left regular bands}
\label{ss:lerayFPCLRBs}

In this section, we prove that the global dimension of a free partially
commutative left regular band $B(\Gamma)$ is equal to the Leray number of
the clique complex of the graph $\Gamma$.

\begin{Thm}\label{partiallycommmain}
Let $\Gamma=(V,E)$ be a finite graph and $\Bbbk$ a commutative ring with unit.
Then, for $W\subsetneq U\subseteq V$ and $n\geq 1$, we have
\begin{align*}
    \Ext^n_{\Bbbk B(\Gamma)}(\Bbbk_U,\Bbbk_W)=\til H^{n-1}(\Cliq(\Gamma[U\setminus W]),\Bbbk).
\end{align*}
\end{Thm}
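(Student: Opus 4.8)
The plan is to derive the theorem from the main result, Theorem~\ref{mainresult}, by pinning down the homotopy type of the order complex $\Delta(U,W)$. For the free partially commutative left regular band the principal left ideal $B(\Gamma)a$ depends only on the set $\sigma(a)$ of letters occurring in $a$, and $B(\Gamma)a\subseteq B(\Gamma)b$ if and only if $\sigma(a)\supseteq\sigma(b)$; thus $\Lambda(B(\Gamma))$ is the Boolean lattice $2^{V}$ ordered by reverse inclusion, the simple $\Bbbk B(\Gamma)$-modules are the $\Bbbk_X$ with $X\subseteq V$, and the hypothesis $W\subsetneq U$ is exactly the relation $X<Y$ of Theorem~\ref{mainresult} with $X=U$ and $Y=W$. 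Since reduced cohomology with coefficients in any commutative ring $\Bbbk$ is a homotopy invariant, it suffices to prove that $\Delta(U,W)$ is homotopy equivalent to $\Cliq(\Gamma[U\setminus W])$; the formula for $\Ext^{n}$ then follows from Theorem~\ref{mainresult}.

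First I would identify the restriction--contraction monoid. Fix $y\in B(\Gamma)$ with $\sigma(y)=W$ and set $B(\Gamma[U])=\{b\in B(\Gamma):\sigma(b)\subseteq U\}$, the submonoid generated by $U$, which is isomorphic to the free partially commutative left regular band on the induced subgraph $\Gamma[U]$. I claim $B(\Gamma)[U,W]=y\,B(\Gamma[U])$: if $\sigma(b)\subseteq U$ then $\sigma(yb)=\sigma(y)\cup\sigma(b)\subseteq U$ and $y(yb)=yb$, so $yb\in B(\Gamma)[U,W]$; conversely $z\leq_{\R}y$ gives $z=yz\in yB(\Gamma)$, and if also $\sigma(z)\subseteq U$ then writing $z=yb$ forces $\sigma(b)\subseteq U$. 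Using the identities $x^{2}=x$ and $xyx=xy$ one checks that $yB(\Gamma[U])$ is a submonoid with identity $y$, hence a finite left regular band in its own right; consequently $\Delta(U,W)$ is precisely its complex $\Delta(\wh0,\wh1)$.

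Next I would show that $B(\Gamma)[U,W]$ is geometric, for which it is enough to show $B(\Gamma)$ is geometric (the left stabilizer of an element of a submonoid is contained in the left stabilizer of that element in the ambient monoid, hence is commutative once the latter is). That $B(\Gamma)$ is geometric follows from the fact that it embeds into some $\{0,+,-\}^{n}$; alternatively one argues directly: by Theorem~\ref{wordproblem} an element $a$ is the pair $(\sigma(a),\OO(a))$, its left stabilizer $\{b:ba=a\}$ is the set of restrictions $(D,\OO(a)|_{D})$ with $D$ a down-set of the transitive closure of $\OO(a)$, and any two such restrictions commute because a vertex of $D_{1}\setminus D_{2}$ and a vertex of $D_{2}\setminus D_{1}$ adjacent in $\ov\Gamma$ would force an edge orientation incompatible with one of the two down-sets, a contradiction, while the remaining vertex pairs yield the same support and the same acyclic orientation in either order. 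Hence Theorem~\ref{commutationgraph} applies to $B(\Gamma)[U,W]$.

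Finally I would compute the maximal elements and their commutation graph. One has $yv=y$ exactly when $v\in W$; a first-occurrence argument on words shows that every $yc$ in $B(\Gamma)[U,W]\setminus\{y\}$ lies $\R$-below $yv^{\ast}$, where $v^{\ast}$ is the first letter lying in $U\setminus W$ of any word representing $c$, while each $yv$ with $v\in U\setminus W$ is $\R$-maximal since its support $W\cup\{v\}$ covers $\sigma(y)=W$ in $\Lambda(B(\Gamma)[U,W])$ (and $\R$-comparable elements of equal support coincide). Thus $\mathscr M(B(\Gamma)[U,W])=\{yv:v\in U\setminus W\}$, in bijection with $U\setminus W$. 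For distinct $u,v\in U\setminus W$ one computes $(yu)(yv)=yuv$ and $(yv)(yu)=yvu$, and by Theorem~\ref{wordproblem} these are equal if and only if $\{u,v\}\in E$; so the commutation graph of $\mathscr M(B(\Gamma)[U,W])$ is isomorphic to $\Gamma[U\setminus W]$. Theorem~\ref{commutationgraph} then yields $\Delta(U,W)\simeq\Cliq(\Gamma[U\setminus W])$, completing the proof. The step I expect to be the main obstacle is the combinatorial bookkeeping powered by Theorem~\ref{wordproblem} --- making precise the prefix/down-set description of left stabilizers, and verifying that pulling a letter to the front of a reduced word leaves the represented element of $B(\Gamma)$ unchanged exactly under the stated adjacency condition --- since this underlies both the geometricity of $B(\Gamma)$ and the determination of $\mathscr M(B(\Gamma)[U,W])$.
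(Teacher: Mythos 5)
Your proposal is correct and follows essentially the same route as the paper: identify $B(\Gamma)$ (or its restriction--contraction $B(\Gamma)[U,W]$) as a geometric left regular band, compute the commutation graph of its maximal elements, and combine Theorem~\ref{commutationgraph} with Theorem~\ref{mainresult}. The only difference is that the paper proves just the case $W=\emptyset$, $U=V$ and asserts that the general case reduces to it, whereas you carry out that reduction explicitly by showing $B(\Gamma)[U,W]=yB(\Gamma[U])$ is itself a geometric left regular band whose maximal elements $\{yv: v\in U\setminus W\}$ have commutation graph $\Gamma[U\setminus W]$ --- a correct and welcome completion of the argument.
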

\begin{proof}
We present here the proof of the special case $W=\emptyset$ and $U=V$
(it turns out that the general case reduces to this case).
The maximal elements of $B(\Gamma)\setminus \{1\}$ are the elements of $V$.
The commutation graph for this set is exactly $\Gamma$.  Since $B(\Gamma)$ is
a geometric left regular band, we conclude $\Delta(\wh 0,\wh 1)$ is homotopy
equivalent to $\Cliq(\Gamma)$ by Theorem~\ref{commutationgraph}.  The theorem
now follows from Theorem~\ref{mainresult}.
\end{proof}

Our first corollary characterizes the free partially commutative left regular
bands with a hereditary $\Bbbk$-algebra. A graph is said to be \emph{chordal}
if it contains no induced cycle of length greater than $3$. It is known that
$L_\Bbbk(K)\leq 1$ if and only if $K$ is the clique complex of a chordal graph,
cf.~\cite{oldpaper,Wegner,BolandLek}.

\begin{Cor}
\label{FPLBgldim}
If $\Bbbk$ is a field and $\Gamma$ a finite graph, then the global dimension of
$\Bbbk B(\Gamma)$ is the $\Bbbk$-Leray number $L_\Bbbk(\Cliq(\Gamma))$. In particular,
$\Bbbk B(\Gamma)$ is hereditary if and only if $\Gamma$ is a chordal graph.
\end{Cor}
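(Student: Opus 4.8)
The strategy is to combine Theorem~\ref{partiallycommmain} with Theorem~\ref{globaldimension} and then unwind the definition of the Leray number. First I would observe that Theorem~\ref{globaldimension} expresses $\mathrm{gl.}\dim \Bbbk B(\Gamma)$ as the smallest $n$ such that $\til H^n(\Delta(X,Y),\Bbbk)$ vanishes for all $X<Y$ in the support lattice. For $B(\Gamma)$ the support lattice is the Boolean lattice on $V$, so pairs $X < Y$ correspond to pairs $W \subsetneq U \subseteq V$ (with $X=U$, $Y=W$ under the reverse-ordering convention). Theorem~\ref{partiallycommmain} (or rather the homotopy equivalence $\Delta(X,Y)\simeq \Cliq(\Gamma[U\setminus W])$ that underlies it) then identifies $\til H^n(\Delta(X,Y),\Bbbk)$ with $\til H^n(\Cliq(\Gamma[U\setminus W]),\Bbbk)$. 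Hence
\[
    \mathrm{gl.}\dim \Bbbk B(\Gamma)
    = \min\{\, n : \til H^n(\Cliq(\Gamma[S]),\Bbbk)=0 \text{ for all } \emptyset\ne S\subseteq V \,\}.
\]

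The remaining point is that the right-hand side is exactly $L_\Bbbk(\Cliq(\Gamma))$. Here I would use the elementary but important fact that an induced-subgraph clique complex $\Cliq(\Gamma[S])$ is precisely the induced subcomplex $\Cliq(\Gamma)[S]$ of $\Cliq(\Gamma)$ on the vertex subset $S$ — a clique of $\Gamma$ contained in $S$ is the same thing as a clique of $\Gamma[S]$. Plugging this into the definition $L_\Bbbk(K)=\min\{d : \til H^i(K[W],\Bbbk)=0 \text{ for all } i\ge d \text{ and all } W\subseteq V\}$ gives a condition quantified over all degrees $i\ge d$, whereas Theorem~\ref{globaldimension} only demands vanishing in the single degree $n$. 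I would reconcile these by noting the monotonicity already implicit in Theorem~\ref{globaldimension}: since $\mathrm{gl.}\dim$ is characterized as the smallest $n$ with $\Ext^m$ vanishing for \emph{all} $m>n$, and by Theorem~\ref{partiallycommmain} those higher $\Ext$ groups are again reduced cohomology of clique complexes of induced subgraphs, the "vanishing in degree $n$" in the displayed formula of Theorem~\ref{globaldimension} automatically propagates upward. Thus the minimal such $n$ coincides with the Leray threshold $d$, giving $\mathrm{gl.}\dim \Bbbk B(\Gamma) = L_\Bbbk(\Cliq(\Gamma))$.

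For the second assertion, I would invoke the quoted characterization $L_\Bbbk(K)\le 1$ iff $K$ is the clique complex of a chordal graph (cited to \cite{oldpaper,Wegner,BolandLek}). Since $\Bbbk B(\Gamma)$ is hereditary iff $\mathrm{gl.}\dim \Bbbk B(\Gamma)\le 1$, and we have just shown this global dimension equals $L_\Bbbk(\Cliq(\Gamma))$, the algebra is hereditary precisely when $L_\Bbbk(\Cliq(\Gamma))\le 1$, i.e.\ precisely when $\Gamma$ is chordal. (One should note $\mathrm{gl.}\dim=0$ is impossible unless $\Gamma$ is complete, but that case is subsumed: a complete graph is chordal and its clique complex is a simplex, which is acyclic, so $L_\Bbbk=0\le 1$.)

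The main obstacle I anticipate is the careful matching of quantifiers in the two vanishing conditions — Theorem~\ref{globaldimension} speaks of a single cohomological degree while the Leray number ranges over all degrees $\ge d$ — together with getting the index conventions straight ($n$ versus $n-1$ in Theorem~\ref{partiallycommmain}, and the order-reversal $X=U$, $Y=W$ between the support lattice and the Boolean lattice). Once one is careful that the higher $\Ext$-groups appearing in the definition of global dimension are themselves clique-complex cohomologies of induced subgraphs (so vanishing in one degree forces the pattern needed), the argument is essentially a bookkeeping exercise layered on top of Theorems~\ref{partiallycommmain} and~\ref{globaldimension}.
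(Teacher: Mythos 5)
Your derivation is correct and is exactly the route the paper intends: combine Theorem~\ref{partiallycommmain} (equivalently, the homotopy equivalence $\Delta(U,W)\simeq\Cliq(\Gamma[U\setminus W])=\Cliq(\Gamma)[U\setminus W]$) with the characterization of global dimension in Theorem~\ref{globaldimension}, note that the nonempty sets $U\setminus W$ range over all induced subcomplexes of $\Cliq(\Gamma)$, and finish with the quoted fact that $L_\Bbbk(K)\leq 1$ iff $K$ is the clique complex of a chordal graph. Your care with the degree shift and with the quantifier over all degrees $i\geq d$ (resolved by returning to the $\Ext$-theoretic definition of global dimension) is exactly the right bookkeeping.
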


Our next corollary computes the quiver of the algebra of a free partially commutative left
regular band.

\begin{Cor}
Let $\Gamma=(V,E)$ be a finite graph.
The quiver of $\Bbbk B(\Gamma)$ has vertex set the power set of $V$. If $U\supsetneq
W$, then the number of arrows from $U$ to $W$ is one less than the number of
connected components of $\Gamma[U\setminus W]$.  There are no other arrows.
\end{Cor}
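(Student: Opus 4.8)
The plan is to obtain this directly from the quiver description in Theorem~\ref{quiver}, combined with the cohomology computation of Theorem~\ref{partiallycommmain} (equivalently, with the homotopy equivalence behind it coming from Theorem~\ref{commutationgraph}).

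First I would pin down the combinatorial data attached to $B(\Gamma)$. The canonical morphism $F(V)\to B(\Gamma)$ preserves the set of letters occurring in a word, so it induces an isomorphism of support lattices $\Lambda(B(\Gamma))\cong\Lambda(F(V))=P(V)$, with union in the role of the meet (a fact also visible from Theorem~\ref{wordproblem}, which shows that the letter set is a well-defined invariant on $B(\Gamma)$). Consequently the order on $\Lambda(B(\Gamma))$ is reverse inclusion, and the simple $\Bbbk B(\Gamma)$-modules are the $\Bbbk_U$ for $U\subseteq V$. Theorem~\ref{quiver} then says at once that the quiver of $\Bbbk B(\Gamma)$ has vertex set $P(V)$ and that there are no arrows from $U$ to $W$ unless $U<W$ in $\Lambda(B(\Gamma))$, i.e. unless $U\supsetneq W$ as subsets of $V$. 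This establishes the vertex set and the final sentence of the statement.

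It remains to count the arrows from $U$ to $W$ when $U\supsetneq W$. By Theorem~\ref{quiver} this number is one less than the number of connected components of the order complex $\Delta(U,W)$, so it suffices to show that $\Delta(U,W)$ and $\Gamma[U\setminus W]$ have the same number of connected components. Combining Theorem~\ref{partiallycommmain} (applied with $U\supsetneq W$) with Theorem~\ref{mainresult} gives $\til H^{n-1}(\Delta(U,W),\Bbbk)\cong\til H^{n-1}(\Cliq(\Gamma[U\setminus W]),\Bbbk)$ for all $n\geq 1$; in particular $\Delta(U,W)$ and $\Cliq(\Gamma[U\setminus W])$ have isomorphic reduced degree-zero cohomology over the field $\Bbbk$, hence the same number of connected components. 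Since $\Cliq(G)$ has $G$ as its $1$-skeleton, $\Cliq(\Gamma[U\setminus W])$ and $\Gamma[U\setminus W]$ also have the same number of connected components. Chaining these identifications with Theorem~\ref{quiver} yields exactly that the number of arrows from $U$ to $W$ is one less than the number of connected components of $\Gamma[U\setminus W]$, completing the proof.

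There is essentially no obstacle here; the work is the assembly of results already in hand. The two points that need care are the direction of the order on $\Lambda(B(\Gamma))$ --- it is reverse inclusion, which is precisely why the hypothesis $U\supsetneq W$ of the corollary is the hypothesis $U<W$ of Theorem~\ref{quiver} --- and the reduced-cohomology convention that $\dim_\Bbbk\til H^0(K,\Bbbk)$ equals the number of connected components of $K$ minus one, which is what produces the ``one less than'' in the statement. One can equally well skip $\Delta(U,W)$ entirely and read the arrow count straight off Theorem~\ref{partiallycommmain} with $n=1$, since $\dim_\Bbbk\Ext^1_{\Bbbk B(\Gamma)}(\Bbbk_U,\Bbbk_W)=\dim_\Bbbk\til H^0(\Cliq(\Gamma[U\setminus W]),\Bbbk)$ is one less than the number of connected components of $\Gamma[U\setminus W]$.
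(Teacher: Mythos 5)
Your proof is correct and follows the same route the paper intends: the corollary is read off from Theorem~\ref{partiallycommmain} with $n=1$ (equivalently, from Theorem~\ref{quiver} together with the homotopy equivalence $\Delta(U,W)\simeq\Cliq(\Gamma[U\setminus W])$), using that $\dim_\Bbbk\til H^0$ is one less than the number of connected components and that $\Cliq(G)$ and $G$ share a $1$-skeleton. Your remarks on the identification $\Lambda(B(\Gamma))\cong P(V)$ with reverse inclusion are exactly the bookkeeping the paper leaves implicit.
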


%Corollary~\ref{freelrb} can be recovered from these results
%by specializing to the case that $\Gamma$ has no edges.

It is easy to see that if $\Gamma$ is \emph{triangle-free}, that is, has no
$3$-element cliques, then $\Cliq(\Gamma)=\Gamma$ and so $L_\Bbbk(\Cliq(\Gamma))=2$ unless
$\Gamma$ is a forest (in which case $\Gamma$ is chordal). This provides a natural infinite family of finite
dimensional algebras of global dimension $2$.

\subsection{CW left regular bands and minimal projective resolutions}

In this last section, we indicate how one can use topological properties of the
$\R$-poset of a left regular band $B$ to construct minimal projective
resolutions of the modules $\Bbbk_X$ (recall that these are all the simple
$\Bbbk B$-modules when $\Bbbk$ is a field, \emph{cf.} \S~\ref{s:mainresult}).

For this section, we assume that $B$ is a finite left regular band, but allow it to be a semigroup. To simplify notation below, we write $B_{\geq X}$ for $B[X, \wh 1]$, which we call the \emph{contraction} of $B$ to $X$ (it corresponds to the usual notion of contraction in hyperplane and oriented matroid theory).  We say that $B$ is \emph{connected} if each $\Delta(B_{\geq X})$ is connected; this is automatic if $B$ is a monoid since $1$ is a greatest element and hence $\Delta(B_{\geq X})$ is in fact contractible. In~\cite{newpaper} it is proved using  Theorem~\ref{mainresult} that $\Bbbk B$ is unital if and only if $B$ is connected and, moreover, the $\Delta(B_{\geq X})$ are all acyclic in this case.

\begin{Thm}\label{t:projectiveres}
Let $B$ be a connected left regular band and $\Bbbk$ a commutative ring with unit.  Then
the augmented simplicial chain complex
$C_\bullet(\Delta(B_{\geq X});\Bbbk)\xrightarrow{\,\,\varepsilon\,\,}\Bbbk_X$ is a $\Bbbk
B$-projective resolution.
%
%More precisely, if we fix for each $X \in \Lambda(B)$ an element $e_X \in B$
%with $Be_X = X$, then
%\[C_q(\Delta(B);\Bbbk)\cong \bigoplus_{X\in \Lambda(B)} n_X\cdot \Bbbk L_X\]
%where $\Bbbk L_X$ is the Sch\"utzenberger representation associated to $X$ and
%$n_X$ is the number of $q$-simplices in the order complex $\Delta(e_X B)$ of
%$e_X B$ with $e_X$ as a vertex.
\end{Thm}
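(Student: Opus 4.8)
The plan is to build the putative projective resolution explicitly from the combinatorics of the $\R$-poset and then verify it has the required properties by a two-pronged argument: exactness, and projectivity of each term. Write $P_X = B_{\geq X}$ for brevity. First I would identify the chain groups: $C_n(\Delta(P_X);\Bbbk)$ has $\Bbbk$-basis the $n$-chains $z_0 <_\R z_1 <_\R \cdots <_\R z_n$ in $P_X$, and the key point is to endow each $C_n$ with a $\Bbbk B$-module structure making the boundary maps $\partial_n$ and the augmentation $\varepsilon$ into $\Bbbk B$-module morphisms. The natural action is $b\cdot (z_0 < \cdots < z_n) = (bz_0 < \cdots < bz_n)$ whenever left multiplication by $b$ preserves the chain and its length, and $0$ otherwise; one checks using the left regular band identities and the description $a\leq_\R b \iff ba=a$ that multiplication by $b$ induces an order-preserving partial map on $P_X$, and that $\sigma(b)\geq X$ is exactly the condition under which $b$ acts as the identity on $P_X$ (consistent with the $\Bbbk B$-module $\Bbbk_X$). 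Verifying that $\partial$ and $\varepsilon$ are equivariant, and that the action is associative, is the first block of (essentially routine) work.

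Next I would handle exactness. Since $B$ is connected, the hypothesis tells us each $\Delta(P_X)$ is acyclic, so the augmented chain complex $C_\bullet(\Delta(P_X);\Bbbk) \xrightarrow{\varepsilon} \Bbbk$ is exact as a complex of $\Bbbk$-modules. Exactness is a statement about the underlying $\Bbbk$-modules, so this part is immediate from acyclicity — nothing more is needed here. The substance of the theorem is therefore concentrated in the projectivity claim.

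For projectivity, the strategy is to decompose each $C_n$ as a direct sum, indexed by the top element $z_n$ of the chain, of modules of the form $\Bbbk B z_n$-flavored pieces, and to recognize these as direct summands of free $\Bbbk B$-modules. Concretely, fix $z \in P_X$ and let $C_n^{(z)}$ be the span of $n$-chains with top equal to $z$; since the $B$-action sends the top of a chain to (a multiple of) an element $\geq_\R z$ only when it fixes $z$, one shows $C_n = \bigoplus_{z} C_n^{(z)}$ is a $\Bbbk B$-module decomposition after regrouping, and $C_n^{(z)} \cong \Bbbk B \otimes_{\Bbbk B_{\geq \sigma(z)}} (\text{something})$, which reduces the problem to the idempotent $e_z$ generated by $z$: the module $\Bbbk B z \cong \Bbbk B e_z$ is projective because $e_z$ is idempotent, and the chains with fixed top $z$ assemble into a sum of copies of such projectives (one copy per chain $z_0 < \cdots < z_{n-1} < z$ in $P_X$ below $z$). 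This is the standard mechanism by which simplicial chain complexes of $\R$-order subposets become projective over a left regular band algebra, and I would expect the bookkeeping — matching the combinatorial index set of chains with the correct idempotents and checking the direct-sum decomposition respects the $B$-action — to be the main obstacle. The delicate point is that left multiplication by $b$ may shorten a chain (collapsing two comparable elements to the same image), so one must argue these degeneracies are exactly absorbed by the "$0$ otherwise" clause in the action and do not break the module decomposition; handling this cleanly, perhaps by working one $\sigma$-fiber at a time and invoking the second displayed property of $\sigma$ ($ab=a \iff \sigma(b)\geq\sigma(a)$), is where care is required.

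Finally I would assemble the pieces: equivariance of $\partial$ and $\varepsilon$ (Step 1), exactness over $\Bbbk$ from acyclicity of $\Delta(P_X)$ (Step 2), and projectivity of each $C_n(\Delta(P_X);\Bbbk)$ as a $\Bbbk B$-module (Step 3) together give that $C_\bullet(\Delta(P_X);\Bbbk) \xrightarrow{\varepsilon} \Bbbk_X$ is a projective resolution of $\Bbbk_X$ over $\Bbbk B$, as claimed.
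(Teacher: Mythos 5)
The paper itself only states this theorem and defers the proof to the companion papers, so your proposal has to be judged on its own terms; the one-line hint the authors do give (in the discussion of Theorem~\ref{t:cwlrb}) is exactly your outline: $B$ acts on the complex, acyclicity gives exactness, and the chain modules ``turn out to be projective.'' Your Steps 1 and 2 are fine. The action $b\cdot(z_0<\cdots<z_n)=(bz_0<\cdots<bz_n)$, set to $0$ when $\sigma(b)\not\geq X$ or when the image degenerates, is the right one: left multiplication is order preserving for $\leq_\R$, both degeneracy and leaving $B_{\geq X}$ are stable under further left multiplication (so the action is associative), and the usual sign cancellation of the two faces omitting a collapsed pair makes $\partial$ equivariant. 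Exactness is indeed a purely $\Bbbk$-linear statement following from acyclicity of $\Delta(B_{\geq X})$ (automatic for a monoid, since $1$ is a cone point; in the semigroup case acyclicity is a nontrivial consequence of connectedness established in the companion paper, not literally ``the hypothesis'').

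The gap is in Step 3, which is the entire content of the theorem. The span $C_n^{(z)}$ of chains with top $z$ is not a $\Bbbk B$-submodule (the element $b$ moves the top to $bz$), and no regrouping by tops repairs this, because the cyclic module generated by a chain $c$ with top $z$ is a proper \emph{quotient} of the projective $\Bbbk Bz$, not a copy of it: the surjection $\Bbbk Bz\to\Bbbk B\cdot c$, $u\mapsto(uz_0<\cdots<uz_{n-1}<u)$, kills every $u\in Bz$ with $\sigma(u)\not\geq X$ and every $u$ that degenerates the chain, and quotients of projectives need not be projective. Your bookkeeping also fails numerically: ``one copy of $\Bbbk Bz$ per chain with top $z$'' already breaks at $n=0$. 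Take $B=F(\{a,b\})$ and $X=\sigma(a)$; then $B_{\geq X}=\{1,a\}$ and $C_0=\Bbbk\{1,a\}$ is $2$-dimensional, whereas $\Bbbk B\cdot 1\oplus\Bbbk B\cdot a$ has dimension $5+3=8$, and the cyclic submodules generated by the $0$-chains $1$ and $a$ are $\Bbbk\{1,a\}$ and $\Bbbk\{a\}$, which overlap rather than forming a direct sum. What is actually true (over a field) is that $\Bbbk B_{\geq X}\cong\bigoplus_{Y\geq X}P_Y$, one copy of each projective indecomposable with $Y\geq X$, and more generally that each $C_n$ decomposes into the $P_Y$ with multiplicities governed by the chains; proving this requires the finer structure theory of $\Bbbk B$ (Saliola-type orthogonal idempotents, equivalently an argument that $\Hom_{\Bbbk B}(C_n,-)$ is exact), not merely the observation that each $z\in B$ is an idempotent. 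As written, Step 3 asserts the hard part rather than proving it.
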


Notice that $C_0(\Delta(B_{\geq X});\Bbbk)=\Bbbk B_{\geq X}$ and hence this resolution is almost never
minimal. We can improve upon this to get a minimal
resolution by taking advantage of the fact that the $\R$-posets of many of the
examples we have been discussing have very nice topology. For instance, for
real and complex hyperplane face monoids, the subposets $B[X,Y]$ are the face
posets of spheres
(see \S\S~\ref{ss:hyperplanemonoids}--\ref{ss:complexhyperplanemonoids}).

Following~\cite{bjornerCW}, we say that a poset $P$ is a
\emph{CW poset} if it is the face poset of a finite
regular CW complex; we denote the associated CW complex by $\Sigma(P)$.

Motivated by the many examples, we formulate the following definition.
We say that a left regular band $B$ is a \emph{CW left regular band} if
$B_{\geq X}$ is a connected CW poset for all $X \in \Lambda(B)$. For example, the set of
faces of a central or affine hyperplane arrangement or the set of covectors of an oriented
matroid (possibly affine) is a CW left regular band. The face poset of a $\mathrm{CAT}(0)$ cube complex~\cite{Wise,Chepoi} also admits a CW left regular band structure.  Tree space~\cite{BilleraHolmes}, sometimes called the tropical Grassmannian, is an example of a $\mathrm{CAT}(0)$ cube complex of combinatorial interest.

The enumerative combinatorics of a CW left regular band is like that of a hyperplane arrangement or an oriented matroid.  The support semilattice is Cohen-Macaulay and the $f$-vector is determined by the support semilattice via an analogue of Zaslavsky's theorem. See~\cite{newpaper} for details.

\begin{Thm}\label{t:cwlrb}
Let $B$ be a finite CW left regular band, $X \in \Lambda(B)$ and let $\Bbbk$ be
a field. Then the augmented cellular chain complex
$C_\bullet(\CW(B_{\geq X});\Bbbk)\to \Bbbk_X$
is the minimal projective resolution of $\Bbbk_{\geq X}$.

In particular, this applies to the case where $B$ is the face monoid of a (central or affine) real
or complex hyperplane arrangement, the set of covectors of an
oriented matroid or oriented interval greedoid, or the face monoid of a $\mathrm{CAT}(0)$ cube complex.
\end{Thm}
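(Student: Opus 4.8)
The plan is to deduce minimality from the general projective resolution of Theorem~\ref{t:projectiveres} by exploiting the CW structure. First I would observe that since $B$ is a CW left regular band, each $B_{\geq X}$ is a connected CW poset, hence in particular connected, so $B$ is connected in the sense of the preceding paragraph and Theorem~\ref{t:projectiveres} applies: the augmented simplicial chain complex $C_\bullet(\Delta(B_{\geq X});\Bbbk) \xrightarrow{\,\varepsilon\,} \Bbbk_X$ is a $\Bbbk B$-projective resolution. The point of the CW hypothesis is that we may replace the order complex $\Delta(B_{\geq X})$ (the barycentric subdivision of $\CW(B_{\geq X})$) by the coarser regular CW complex $\CW(B_{\geq X})$ itself. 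So the first substantive step is to verify that the augmented cellular chain complex $C_\bullet(\CW(B_{\geq X});\Bbbk) \to \Bbbk_X$ is again a chain complex of projective $\Bbbk B$-modules resolving $\Bbbk_X$. Here I would check that each cellular chain group $C_n(\CW(B_{\geq X});\Bbbk)$ is a projective $\Bbbk B$-module: it is free abelian on the $n$-cells of $\CW(B_{\geq X})$, i.e.\ on the elements of $B_{\geq X}$ of appropriate rank, and the $B$-action permutes (up to the simple modules $\Bbbk_Y$) the principal ideals, so each $C_n$ decomposes as a direct sum of modules of the form $\Bbbk B e$ for idempotents $e$; this is the same computation that underlies $C_0(\Delta(B_{\geq X});\Bbbk) = \Bbbk B_{\geq X}$ in the remark after Theorem~\ref{t:projectiveres}. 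That the cellular complex still resolves $\Bbbk_X$ follows from the acyclicity of $\CW(B_{\geq X})$ (it has the homology of a point, being connected with contractible order complex when $B$ is a monoid, and acyclic in general by~\cite{newpaper}), together with the fact that cellular and simplicial homology of a regular CW complex and its barycentric subdivision agree; one must do this $B$-equivariantly, which is routine because subdivision is functorial and compatible with the $B$-action on face posets.

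The heart of the argument is \textbf{minimality}. A projective resolution $P_\bullet \to S$ of a simple module over a finite-dimensional algebra is minimal precisely when all the differentials $P_n \to P_{n-1}$ have image contained in the radical of $P_{n-1}$, equivalently when the complex $\Bbbk \otimes_{\Bbbk B} P_\bullet$ obtained by applying $\Hom_{\Bbbk B}(-, S_Y)$ (or tensoring against the quotient by the radical) has zero differentials, so that $\Ext^n_{\Bbbk B}(\Bbbk_X,\Bbbk_Y)$ is computed by the ranks of the $P_n$ alone. So I would compute, for each $Y \in \Lambda(B)$, the multiplicity of $\Bbbk_Y$ as a "top composition factor" of $C_n(\CW(B_{\geq X});\Bbbk)$ — this is the number of $n$-cells $z$ of $\CW(B_{\geq X})$ with $\sigma(z) = Y$ — and compare it with $\dim_\Bbbk \Ext^n_{\Bbbk B}(\Bbbk_X,\Bbbk_Y)$. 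By Theorem~\ref{mainresult} the latter is $\dim_\Bbbk \til H^{n-1}(\Delta(X,Y),\Bbbk)$ for $X < Y$. Now the key geometric input is that, because $B$ is a CW left regular band, the subposet $B[X,Y]$ is a connected CW poset, so $\Delta(X,Y)$ — the order complex of $B[X,Y] \setminus \{y\}$ — is the barycentric subdivision of the boundary complex $\CW(B[X,Y])$ with its top cell removed; in the hyperplane/oriented-matroid case this boundary is a sphere, and in general it is a connected regular CW complex whose reduced homology is concentrated appropriately. The combinatorial identity I need is that the number of cells of $\CW(B_{\geq X})$ with support exactly $Y$ coincides with $\dim_\Bbbk \til H^{\dim - 1}(\CW(B[X,Y]) \setminus \{\text{top}\};\Bbbk)$, which should drop out of the CW-poset structure: the interval $[z, \wh{1}]$ in $B_{\geq X}$ for a cell $z$ with $\sigma(z) = Y$ governs the local structure near that cell, and regularity of the CW complex forces these intervals to be "thin" enough that each cell contributes exactly one dimension to the relevant reduced homology group. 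Assembling this over all $Y$ shows $\dim_\Bbbk C_n = \dim_\Bbbk (\text{cycles}) = \dim_\Bbbk \Ext^n + \dim_\Bbbk (\text{boundaries})$ forces the boundary maps to land in the radical, i.e.\ minimality.

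\textbf{The main obstacle} I anticipate is precisely this last cell-counting/homology identification: making rigorous, for a general CW left regular band (not just a hyperplane arrangement where everything is a genuine sphere and Zaslavsky-type counting is classical), that the cellular chain complex of $\CW(B_{\geq X})$ has exactly the "right size" in each degree and at each support. This requires understanding the interplay between the global CW structure of $B_{\geq X}$ and the local structure of the intervals $B[X,Y]$, and invoking the Cohen-Macaulayness of the support semilattice and the $f$-vector analogue of Zaslavsky's theorem alluded to in the paragraph before the theorem (proved in~\cite{newpaper}). A clean way to organize it is: (1) establish the cellular complex is a projective resolution; (2) show $C_n = \bigoplus_{Y} (\Bbbk B e_Y)^{\oplus m_{n,Y}(X)}$ where $m_{n,Y}(X)$ is the number of rank-$n$ cells of support $Y$; (3) show $m_{n,Y}(X) = \dim_\Bbbk \til H^{n-1}(\Delta(X,Y),\Bbbk)$ using the CW-poset structure of $B[X,Y]$ and the fact that the reduced homology of the punctured boundary complex of a connected CW poset is free of the expected rank; (4) conclude minimality since a projective resolution of a simple module whose $n$-th term has exactly $\dim \Ext^n(S_X, S_Y)$ copies of the projective cover of $S_Y$, for every $Y$ and $n$, is automatically minimal. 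The final "in particular" sentence then follows since each of the listed classes of examples was already noted above to be a CW left regular band.
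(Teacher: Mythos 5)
Your overall skeleton (projective resolution plus a multiplicity count that forces minimality) is reasonable, but the pivotal identification in your steps (2)--(3) is wrong, and it is exactly the point where the CW structure has to be used correctly. You claim that the multiplicity of the projective indecomposable $P_Y$ in $C_n(\CW(B_{\geq X});\Bbbk)$ is the \emph{number} of $n$-cells $z$ with $\sigma(z)=Y$, and that this number equals $\dim_\Bbbk \til H^{n-1}(\Delta(X,Y),\Bbbk)$. Both claims fail already for the arrangement of three lines in $\mathbb R^2$: take $X=\wh 0$ and $Y=\ell$ a line. There are \emph{two} cells of support $\ell$ (the two rays on $\ell$, which are opposite edges of the dual hexagon), while $\Delta(\wh 0,\ell)$ is a pair of points, so $\dim\til H^{0}=1=\dim\Ext^1_{\Bbbk\FFF}(\Bbbk_{\wh 0},\Bbbk_\ell)$. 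The correct statement is that \emph{all} the cells of a fixed support $Y$ (which, for a CW left regular band, share a common dimension $n_Y$) together span a \emph{single} copy of $P_Y$ inside $C_{n_Y}$. The reason is that the cellular action of $b$ on a cell $f$ is $b\cdot f=bf$ when $\sigma(b)\geq\sigma(f)$ and $0$ otherwise (the cell gets collapsed), and the contraction $B_{\geq Y}$ acts transitively on the cells of support $Y$ (for $f,f'$ of support $Y$ one has $f'f=f'$), so $\Hom_{\Bbbk B}(\Bbbk\{f:\sigma(f)=Y\},\Bbbk_Y)$ is one-dimensional. Establishing that this span is projective is the real content of the paper's key ingredient --- the $B$-action on $\CW(B_{\geq X})$ --- and it is not the "same computation" as $C_0(\Delta(B_{\geq X});\Bbbk)=\Bbbk B_{\geq X}$, which is a single cyclic projective, not a sum over vertices.

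Once the decomposition is corrected to $C_n=\bigoplus_{Y:\,n_Y=n}P_Y$ (one copy each), your heavy machinery in step (3) --- the punctured boundary spheres, Cohen--Macaulayness, and the Zaslavsky-type $f$-vector count --- evaporates: no cell counting and no computation of $\til H^{*}(\Delta(X,Y),\Bbbk)$ is needed. Indeed $\Hom_{\Bbbk B}(C_\bullet,\Bbbk_Y)$ is then a complex with $\Bbbk$ in the single degree $n_Y$ and $0$ elsewhere, so its differentials vanish trivially; equivalently, $\partial(C_n)\subseteq\rad(\Bbbk B)C_{n-1}$ because the boundary of a cell $z$ involves only cells $f<_\R z$, which necessarily satisfy $\sigma(f)<\sigma(z)$ strictly. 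This gives minimality at once (and incidentally re-proves Theorem~\ref{mainresult} for CW left regular bands, rather than quoting it). The remaining genuine work, which your write-up passes over, is to make the $B$-action on cellular chains honest: one must choose orientations of the cells so that the action is by unsigned permutations on each support class (or check that the sign twist does not kill the top $\Bbbk_Y$), and one must verify projectivity of each support-class module using connectedness of the contractions $B_{\geq Y}$.
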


The key ingredient in the proof of Theorem~\ref{t:cwlrb} is that if $B$ is a CW
left regular band, then $B$ acts on the CW complex $\Sigma(B)$. Since
$\Sigma(B)$ is acyclic, its augmented chain complex becomes an acyclic chain
complex of $\Bbbk B$-modules, which turn out to be projective.

For CW left regular bands, we can also prove that $\Bbbk B$ is a Koszul
algebra, identify its Koszul dual algebra and its $\Ext$-algebra and describe a
presentation by quiver with relations. Below, $\Inc_{\Bbbk}(P)$ denotes the
incidence algebra of a poset $P$ over the ring $\Bbbk$.
The full details are presented in~\cite{newpaper}.

\begin{Thm}\label{t:Koszulfull}
Let $B$ be a finite CW left regular band and let $\Bbbk$ be a field.
Then $\Bbbk B$ is a Koszul algebra,
its Koszul dual algebra is $\Bbbk B^!\cong \Inc_{\Bbbk}(\Lambda(L)^{op})$
and its $\Ext$-algebra is $\Ext(\Bbbk B)\cong \Inc_{\Bbbk}(\Lambda(L))$.
%
%In particular, this applies to the case where $B$ is the face monoid of a real
%or complex hyperplane arrangment or when $B$ is the set of covectors of an
%oriented matroid or oriented interval greedoid.
\end{Thm}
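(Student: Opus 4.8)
The plan is to bootstrap from Theorem~\ref{t:cwlrb}, which already gives us an explicit minimal projective resolution of each simple $\Bbbk_X$, namely the augmented cellular chain complex $C_\bullet(\CW(B_{\geq X});\Bbbk)$. The first task is Koszulity. Recall that a graded algebra is Koszul precisely when the simple modules admit linear minimal projective resolutions; here $\Bbbk B$ is not a priori graded, so I would first install a grading. The natural choice is the \emph{radical filtration}: by Theorem~\ref{quiver} and the description of the quiver (vertex set $\Lambda(B)$, arrows $X\to Y$ only when $X<Y$), one checks that an arrow $X \to Y$ contributing to a minimal generating set of the radical corresponds to a connected component of $\Delta(X,Y)$, and in the CW case $\Delta(X,Y)=\Delta(B_{\geq Y}[X])$ is the barycentric subdivision of a cell complex, so covering relations in $\Lambda(B)$ account for the arrows. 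The key numerical input is that in $C_n(\CW(B_{\geq X});\Bbbk)$ the indecomposable summands are the $\Bbbk B_{\geq Z}$ with $Z$ of ``codimension $n$'' relative to $X$ in $\Lambda(B)$ — this is exactly what it means for the resolution to be linear with respect to the length grading on $\Lambda(B)$. So I would: (i) put the grading on $\Bbbk B$ coming from the length function on $\Lambda(B)$ (pulled back along $\sigma$, refined on each fiber by declaring the fiber to sit in a single degree), verify it is compatible with multiplication, and (ii) read off from Theorem~\ref{t:cwlrb} that the minimal resolution of $\Bbbk_X$ places the $n$-th term in degree $\deg(X)+n$, hence is linear. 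This gives Koszulity.

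For the identification of the Koszul dual, I would compute the $\Ext$-algebra $\Ext^\bullet_{\Bbbk B}(\bigoplus_X \Bbbk_X,\bigoplus_Y \Bbbk_Y)$ directly from the minimal resolution. Since the $n$-th term of the resolution of $\Bbbk_X$ is $C_n(\CW(B_{\geq X});\Bbbk)=\bigoplus \Bbbk B_{\geq Z}$ over $Z$ at distance $n$ below $X$, applying $\Hom_{\Bbbk B}(-,\Bbbk_Y)$ and using that $\Hom_{\Bbbk B}(\Bbbk B_{\geq Z},\Bbbk_Y)=\Bbbk$ iff $Y\geq Z$ (and $0$ otherwise), one gets that $\Ext^n_{\Bbbk B}(\Bbbk_X,\Bbbk_Y)$ is $\Bbbk$ exactly when $Y\leq X$ with $\deg(X)-\deg(Y)=n$ and $0$ otherwise — the differentials all vanish because the resolution is minimal. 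This already matches $\Inc_\Bbbk(\Lambda(B))$ as a graded vector space; the remaining point is that the \emph{composition} (Yoneda product) agrees with concatenation of intervals in the incidence algebra. I would verify this by comparing composites of the explicit resolution maps: splicing the resolution of $\Bbbk_Y$ onto a lift of a cocycle representing an element of $\Ext^m(\Bbbk_X,\Bbbk_Y)$ yields exactly the chain map realizing the product in the incidence algebra, essentially because the cellular boundary maps of $\CW(B_{\geq X})$ restrict compatibly to $\CW(B_{\geq Z})$ for $Z\leq X$. The statement about $\Bbbk B^!\cong\Inc_\Bbbk(\Lambda(L)^{op})$ then follows since the quadratic dual of a Koszul algebra is its $\Ext$-algebra with the opposite multiplication, and the incidence algebra of a poset and of its opposite are anti-isomorphic.

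The main obstacle I expect is establishing that the grading is genuinely well-defined and that the resolution of Theorem~\ref{t:cwlrb} is linear \emph{with respect to that grading} — i.e., that the $\Bbbk B$-module structure on $C_n(\CW(B_{\geq X});\Bbbk)$ really is a direct sum of the $\Bbbk B_{\geq Z}$ (each generated in a single degree) and that the cellular differential is degree-preserving. This requires understanding the $B$-action on $\Sigma(B)$ from the proof sketch of Theorem~\ref{t:cwlrb}: $b$ acts on a cell by ``sliding it'' via the monoid product, and one must check this action sends the $n$-cells of $\CW(B_{\geq X})$ to the $B$-translates indexed correctly by $\Lambda(B)$, so that $C_n$ decomposes as claimed. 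A secondary technical point is the fiber refinement of the grading: on each $\sigma$-fiber $\sigma^{-1}(Z)$ the algebra $\Bbbk B$ is, up to the radical, a matrix algebra, and one must check this sits in a single graded degree so that the grading descends to $\Bbbk B$ and the simples $\Bbbk_X$ are concentrated in degree $\deg(X)$. Once these structural facts are in place, Koszulity, the $\Ext$-algebra computation, and the Koszul dual identification all follow by the standard machinery, and I would cite~\cite{newpaper} for the lengthy verification of the $B$-action details.
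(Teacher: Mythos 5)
The paper itself gives no proof of this theorem --- it is stated and deferred to~\cite{newpaper} --- but your overall architecture (extract linearity from the minimal resolution of Theorem~\ref{t:cwlrb}, then read off the $\Ext$-algebra and its Yoneda product) is the intended route. The genuine gap is your step~(i), the construction of the grading. No nontrivial grading of $\Bbbk B$ can have the monoid elements as homogeneous elements: every $b\in B$ is idempotent, so $b=b\cdot b$ would force $\deg(b)=2\deg(b)=0$; equivalently, your assignment $\deg(b)=\deg(\sigma(b))$ is incompatible with multiplication because $\deg(\sigma(a)\wedge\sigma(b))\neq\deg\sigma(a)+\deg\sigma(b)$ in general. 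The grading that makes ``linear resolution'' meaningful is the path-length grading of a quiver presentation $\Bbbk B\cong\Bbbk Q/I$ (equivalently, one must show $\Bbbk B$ is isomorphic to the associated graded algebra of its radical filtration): one needs a complete set of primitive orthogonal idempotents, arrow representatives spanning $\rad/\rad^2$, and a proof that $I$ may be chosen quadratic. Only after that does Theorem~\ref{t:cwlrb} translate into ``$C_n$ is generated in degree $n$.'' This is real work, not a ``fiber refinement,'' and it is precisely where the CW hypothesis (each $\Delta(X,Y)$ a sphere, so the quiver is the Hasse diagram of the graded lattice $\Lambda(B)$) gets used.

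Two further slips. First, $\Hom_{\Bbbk B}(\Bbbk B_{\geq Z},\Bbbk_Y)$ is $\Bbbk$ exactly when $Y=Z$, not when $Y\geq Z$: the module $\Bbbk B_{\geq Z}$ of Theorem~\ref{t:cwlrb} is the projective cover of $\Bbbk_Z$, so its head is $\Bbbk_Z$ alone; with your version of the computation, minimality of the resolution would be contradicted. The dimension count for $\Ext^n(\Bbbk_X,\Bbbk_Y)$ you land on is still correct by minimality, once the multiset of $Z$'s indexing the summands of $C_n$ is pinned down --- which is the same decomposition fact you flag as the obstacle. Second, for the $\Ext$-algebra to be the full incidence algebra $\Inc_\Bbbk(\Lambda(B))$ and for Koszulity itself, the substantive point is that the Yoneda pairing $\Ext^1(\Bbbk_Y,\Bbbk_Z)\otimes\Ext^{n-1}(\Bbbk_X,\Bbbk_Y)\to\Ext^n(\Bbbk_X,\Bbbk_Z)$ is nonzero on each one-dimensional piece (this gives both generation in degrees $0$ and $1$ and the identification with the incidence product); you correctly identify the splicing computation with the cellular boundary maps that establishes this, but you do not carry it out, and it cannot be waved through since it is exactly where a sign or a vanishing could in principle break the theorem.
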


%The second corollary, in particular, applies to the case where $B$ is the face
%semigroup of an affine hyperplane arrangement or is the set of covectors
%associated to an affine oriented matroid or a $T$-convex set of topes.
%
%\begin{Cor}
%Let $B$ be a contractible CW left regular band semigroup  and let $\Bbbk$ be
%a field. Then $\Bbbk_{\wh 1}$ is a projective $\Bbbk B^1$-module.  If $X\in
%\Lambda(B)$, then the augmented cellular chain complex $C_\bullet(\CW(B_{\geq
%X});\Bbbk)\to \Bbbk_X$ is the minimal projective resolution of $\Bbbk_{\geq
%X}$ as a $\Bbbk B^1$-module.
%\end{Cor}

\acknowledgements
\label{sec:ack}

The first author wishes to warmly thank the Center for Algorithmic and
Interactive Scientific Software, CCNY, CUNY for inviting him to be a Visiting
Professor during part of the research of this paper.
The second author was supported in part by NSERC and FQRNT.
The third author was supported in part by NSERC.
The first and third authors were supported in part by Binational Science
Foundation of Israel and the US (BSF) grant number 2012080.
Some of the research of this paper was done while the third author was at the
School of Mathematics and Statistics of Carleton University.

%\nocite{*}
%\bibliographystyle{abbrvnat}
% use the following instead if you encounter problems
%\bibliographystyle{alpha}
\bibliographystyle{abbrv}
\bibliography{lrbfpsac-references}
\label{sec:biblio}

\end{document}